\documentclass[12pt]{article} 
\usepackage[T2A]{fontenc} 
\usepackage[utf8]{inputenc} 
\usepackage{comment, amsthm, amssymb, amsmath, mathrsfs, bbm, xcolor} 
\usepackage[draft]{hyperref}
\usepackage[english]{babel}
\textheight=22cm 
\textwidth=16cm 
\hoffset=-12mm 
\voffset=-20mm

\newtheorem{conjecture}{Conjecture}

\newtheorem{theorem}{Theorem} 
\newtheorem{lemma}{Lemma}[section]

\newtheorem{remark}{Remark}

\usepackage{hyperref}

\newtheorem{proposition}{Proposition}

\newtheorem{rem}{Remark}[section]
\theoremstyle{definition}
\theoremstyle{plain}
\newtheorem{definition}{Definition}[section]
\renewcommand{\leq}{\leqslant}
\renewcommand{\geq}{\geqslant}

\DeclareMathOperator{\End}{End}

\begin{document}
\newcommand{\eps}{\varepsilon}
\renewcommand{\phi}{\varphi}

\newcommand{\suchthat}{\, : \,}
\newcommand{\commentDK}[1]{{\color{blue}{#1 -- DK}}}
\newcommand{\dima}[1]{\commentDK{#1}}
\newcommand{\fedya}[1]{{\color{orange}{#1 -- FP}}}
\newcommand{\red}[1]{{\color{red}{#1}}}
\newcommand{\green}[1]{{\color{green}{#1}}}

\author{Dmitry Krachun, Fedor Petrov}
\title{Tight lower bound on $|A+\lambda A|$ for algebraic integer $\lambda$}
\maketitle
\begin{abstract}
We prove an asymptotically tight lower bound on $|A+\lambda A|$ for $A\subset \mathbb{C}$ and algebraic integer $\lambda$. The proof combines strong version of Freiman's theorem, structural theorem on dense subsets of a hypercubic lattice and a generalisation of the continuous result on tight bound for
the measure of $K+\tau K$ for a compact subset
$K\subset \mathbb{R}^d$ of unit Lebesgue 
measure and a fixed linear operator $\tau\colon
\mathbb{R}^d\to \mathbb{R}^d$, obtained by the authors in \cite{KP}.
\end{abstract}

\section{Introduction}

For a subset $A\subset \mathbb{R}$ and a real number $\lambda\in \mathbb{R}$ we define the set $A+\lambda A$ to be
\[
A+\lambda A := \{a_1+\lambda a_2:\, a_1, a_2\in A\}.
\]
The question of finding the asymptotically minimal possible size of $A+\lambda A$ in term of the size of $A$ and $\lambda$ has received considerable attention over recent years. 

When $\lambda=p/q$ is a rational number with coprime
integers $p,q$, Bukh \cite{BUKH2008} proved that 
\[
|A+\tfrac{p}{q}A|\geq (|p|+|q|)\cdot |A|-o(A),
\]
and the error term was later improved to a constant $C=C(p, q)$ in the work of Balog and Shakan \cite{Balog2014}. This is the best possible up to the dependence of $C$ on $p, q$.

For transcendental $\lambda$ (it is easy to see that the bound does not depend on $\lambda$ in this case) the lower bound is no longer linear. Indeed, Konyagin and \L aba  \cite{Konyagin2006} showed that 
\[
|A+\lambda A|\geq C\frac{|A|\log |A|}{\log \log |A|}
\]
for an absolute constant $C$. 

This bound was then improved by Sanders \cite{Sanders2008}
to $|A|\log^{4/3-o(1)} |A|$, then by Schoen \cite{schoen2011} to $(\log{|A|})^{c\log\log{|A|}}|A|$ and again by Sanders \cite{sanders2012} to $e^{\log^c{|A|}}|A|$ for some $c>0$. All these bounds relied on the quantitative refinements of Freiman’s theorem. Very recently Conlon and Lim \cite{ConlonLim-transcendental} improved the bound to $e^{c\sqrt{\log{|A|}}}|A|$ for an absolute constant $c>0$, using much more elementary methods. This bound is tight up to value of the constant $c>0$, as follows from a construction from \cite{Konyagin2006}.

For the case of algebraic $\lambda$ we formulated a conjecture \cite[Conjecture 1]{KP} about the value of $\liminf |A+\lambda A|/|A|$ and proved the upper bound, see Conjecture \ref{conj:main-number} below. We also proved the conjecture for the specific case $\lambda = \sqrt{2}$. For the case $\lambda:=(p/q)^{1/d}$ this conjecture was proved by Conlon and Lim \cite{ConlonLim-algebraic}. In this paper we prove the conjecture for all algebraic integers $\lambda$.


To formulate the conjecture for arbitrary algebraic $\lambda$ we need the following  
\begin{definition}\label{def:H}
   For an irreducible polynomial $f(x)\in \mathbb{Z}[x]$ of degree $d\geq 1$ (irreducibility in particular means that the coefficients of $f$ do not have a common
integer divisor greater than 1) denote 
\[
H(f)=\prod_{i=1}^d (|a_i|+|b_i|),
\]
where $f(x)=\prod_{i=1}^d (a_ix+b_i)$
is a full complex factorization of $f$. 

For arbitrary polynomial $f(x)\in \mathbb{C}[x]$ we define $H(f)$ to be equal to 
$\min_{g|f} H(g)$, where the minimum is taken over all
irreducible polynomials $g(x)\in \mathbb{Z}[x]$ 
such that $g$ divides $f$ in $\mathbb{C}[x]$. In the case when $f$ has no non-constant divisors with integer coefficients we define $H(f):=\infty$. 

For a linear operator $\mathcal{T}\in \operatorname{End}(\mathbb{R}^d)$ we define $H^\circ(\mathcal{T})$ to be equal to $H(f)$, where $f$ is the characteristic polynomial of $\mathcal{T}$.

We also define, for a linear operator $\mathcal{T}\in\End(\mathbb{R}^d)$, $H(\mathcal{T})$ to be equal to $\prod_{i=1}^d (1+|\lambda_i|)$, where $\lambda_i$'s are the eigenvalues
of $\mathcal{T}$.
\end{definition}
Clearly, the value of $H(f)$ is well-defined, i.e. does not depend on the factorization. 


\begin{rem}
We note that $H(\mathcal{T})$ is not in general equal to $H^\circ(\mathcal{T})$. The former corresponds to the continuous problem of bounding the measure of $\Omega+\mathcal{T}\Omega$ for a set $\Omega\subset\mathbb{R}^d$ of measure 1, whereas the latter conjecturally corresponds to discrete problem of bounding $A+\mathcal{T}A$ for large sets $A$ of fixed size. In the case when $\mathcal{T}\in \operatorname{End}(\mathbb{Z}^d)$ has no invariant subspaces we have $H(\mathcal{T}) = H^\circ(\mathcal{T})$, see Proposition \ref{prop:h-h-0-relation}.
\end{rem}


With this definition, \cite[Conjecture 2]{KP} reads as

\begin{conjecture}\label{conj:main-operator}
Let  $\mathcal{T}\in \End(\mathbb{R}^d)$ be a linear operator. Then 
\[
\liminf_{|A|\rightarrow\infty,\, A\subset\mathbb{R}^d} \frac{|A+\mathcal{T} A|}{|A|} = H^\circ(\mathcal{T}).
\]
\end{conjecture}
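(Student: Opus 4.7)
The plan is to establish the lower bound; the matching upper bound was proved in \cite{KP}. I would argue by contradiction: assume there exist sets $A_n\subset\mathbb{R}^d$ with $|A_n|\to\infty$ and $|A_n+\mathcal{T}A_n|\le (H^\circ(\mathcal{T})-\varepsilon)|A_n|$, and convert this discrete bound into a continuous volume inequality that violates the main theorem of \cite{KP}.

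The first step is a Freiman-type structural reduction. Via Plünnecke--Ruzsa applied to the pair $(A_n,\mathcal{T}A_n)$ and a quantitative Freiman theorem, I would embed each $A_n$ into a proper generalized arithmetic progression $P_n$ of bounded dimension $N$ with $|P_n|\asymp|A_n|$. The essential additional input -- the ``strong version of Freiman'' advertised in the abstract -- is that the same $P_n$ can be chosen so that $\mathcal{T}$ approximately maps $P_n$ into a bounded dilate of itself; this rigidifies the $\mathcal{T}$-action on the lattice spanned by $P_n$ and is what distinguishes the argument from a naive doubling reduction, which would give no control on the direction of $\mathcal{T}$.

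The second step is the structural theorem on dense subsets of a hypercubic lattice: after a linear identification, $A_n$ appears as a subset of positive density $\rho>0$ in a large box of an integer lattice $\Lambda\subset\mathbb{R}^d$ of rank $N\ge d$, and $\mathcal{T}$ extends to an integer operator $\widetilde{\mathcal{T}}\in\End(\Lambda)$. Rescaling and taking $n\to\infty$, the normalized counting measures of $A_n$ converge weak-$*$ to a density bounded by $1$ supported in a convex body $K\subset\Lambda\otimes\mathbb{R}\simeq\mathbb{R}^N$, with
\[
\liminf_{n\to\infty}\frac{|A_n+\mathcal{T}A_n|}{|A_n|}\ge\frac{\operatorname{vol}(K+\widetilde{\mathcal{T}}K)}{\operatorname{vol}(K)}.
\]
The continuous bound from \cite{KP} (applied to $\widetilde{\mathcal{T}}$) then yields $\operatorname{vol}(K+\widetilde{\mathcal{T}}K)\ge H(\widetilde{\mathcal{T}})\operatorname{vol}(K)$. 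Finally, because $\widetilde{\mathcal{T}}$ preserves $\Lambda$ its characteristic polynomial lies in $\mathbb{Z}[x]$ and contains an integer-irreducible factor carrying the minimal polynomial of $\mathcal{T}$; applying Proposition \ref{prop:h-h-0-relation} factor by factor gives $H(\widetilde{\mathcal{T}})\ge H^\circ(\mathcal{T})$, contradicting the assumption.

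I expect the first step to be the main obstacle: classical Freiman produces a GAP containing $A_n$ but says nothing about how a prescribed $\mathcal{T}$ acts on it, and extracting an invariant GAP requires using the hypothesis on $|A_n+\mathcal{T}A_n|$ throughout (not merely $|A_n+A_n|$). The discrete-to-continuous passage in the second step is also delicate, since one must rule out degenerate configurations in which $A_n$ concentrates inside a $\widetilde{\mathcal{T}}$-invariant subbox of smaller characteristic, in which case the continuous volume bound would be applied to the wrong operator; the structural theorem handles this by forcing $P_n$ (equivalently the lattice $\Lambda$) to be of minimal dimension compatible with the sumset data.
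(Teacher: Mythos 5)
There is a genuine gap, and it is the central one. The statement you are proving is the full Conjecture~\ref{conj:main-operator} for an arbitrary $\mathcal{T}\in\End(\mathbb{R}^d)$, but the paper only proves it under the hypothesis $\mathcal{T}(\mathbb{Z}^d)\subset\mathbb{Z}^d$ (Theorem~\ref{thm:main-operator}); the general case is open. Your proposal silently smuggles in exactly this hypothesis at the step where you assert that, after the Freiman embedding, ``$\mathcal{T}$ extends to an integer operator $\widetilde{\mathcal{T}}\in\End(\Lambda)$.'' For a general real $\mathcal{T}$ there is no reason the induced action on the lattice generated by the GAP is by an integer matrix: if $d=1$ and $\mathcal{T}$ is multiplication by a transcendental $\lambda$, no such $\widetilde{\mathcal{T}}$ can exist on any finite-rank lattice, $H^\circ(\mathcal{T})=\infty$, and the known (tight) lower bounds of the form $e^{c\sqrt{\log|A|}}|A|$ require entirely different methods; for algebraic non-integers (roots of non-monic irreducible polynomials) the conjecture is likewise not resolved by this paper. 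So the argument cannot work at the claimed level of generality, and the failure point is precisely the lattice-invariance step you treat as an output of the ``strong Freiman'' reduction.

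Even restricted to $\mathcal{T}\in\End(\mathbb{Z}^d)$, where your architecture (strong Freiman $\to$ dense subset of a box $\to$ continuous theorem) does match the paper's, the steps you flag as ``delicate'' are left as gaps and are not filled the way you suggest. The strengthening of Freiman the paper actually uses is $k$-properness of the containing GAP for a prescribed function $k$ (Lemma~\ref{lem:Freiman-strong}); the control on the $\mathcal{T}$-action is then \emph{extracted} from this by a pigeonhole argument producing bounded multiples $\lambda w,\lambda\mathcal{T}w,\dots,\lambda\mathcal{T}^{d-1}w$ inside a bounded dilate of $P$, and the crux is the fibering of $A$ over translates of $\langle w,\dots,\mathcal{T}^{d-1}w\rangle$ together with the proof that the sumsets $A_i+\mathcal{T}A_i$ of distinct fibers are pairwise disjoint --- none of which appears in your outline. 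The degenerate case where $A$ concentrates on an invariant subspace is handled by induction on $d$, not by ``minimality of the Freiman dimension.'' Finally, the discrete-to-continuous passage is not a weak-$*$ limit onto an indicator of a convex body: the displayed inequality $\liminf|A_n+\mathcal{T}A_n|/|A_n|\ge\operatorname{vol}(K+\widetilde{\mathcal{T}}K)/\operatorname{vol}(K)$ is false for a general dense subset of a box (the limit density need not be $0$--$1$ valued, and the sumset sees the density, not just the support). The paper replaces it by a weighted inequality for a pair of densities satisfying $h(x+\mathcal{T}y)\ge f(x)$ (Lemma~\ref{lem:density-functions-inequality}), and the pointwise domination is only available after passing to subcubes in which $A$ is \emph{topologically} dense (Lemma~\ref{lem:structural}). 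Your final step $H(\widetilde{\mathcal{T}})\ge H^\circ(\mathcal{T})$ also needs justification: it requires the characteristic polynomial of $\widetilde{\mathcal{T}}$ to share an integer-irreducible factor with that of $\mathcal{T}$, which the paper arranges explicitly by building $\widetilde{\mathcal{T}}$ as a companion matrix and invoking Proposition~\ref{prop:h-h-0-relation}.
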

This conjecture yields the following result for the behaviour of $\liminf |A+\lambda A|/|A|$ for algebraic $\lambda$, see \cite{KP} for details.
\begin{conjecture}\label{conj:main-number}
    Let $\lambda\in \mathbb{C}$ be an algebraic  number with minimal polynomial $f\in \mathbb{Z}[x]$. Then 
\[
\liminf_{|A|\rightarrow\infty,\, A\subset\mathbb{C}} \frac{|A+\lambda A|}{|A|} = H(f).
\]
\end{conjecture}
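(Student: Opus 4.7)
The plan is to reduce Conjecture~\ref{conj:main-number} for a general algebraic $\lambda$ to the algebraic-integer case (the main theorem of the paper) by explicitly handling the leading coefficient of the minimal polynomial. Let $f(x)=a_dx^d+\cdots+a_0\in \mathbb{Z}[x]$ be the primitive minimal polynomial of $\lambda$, set $q := a_d$, and put $\mu := q\lambda$; then $\mu$ is an algebraic integer with minimal polynomial $g(y) = q^{d-1}f(y/q)$. The dilation bijection $x\mapsto qx$ yields
\[
|A+\lambda A| \;=\; |qA+\mu A|,
\]
so the task reduces to an \emph{asymmetric} sumset estimate. Note that $H(f) = q\prod_i(1+|\lambda_i|)$ while the main theorem applied to $\mu$ would give only $|B+\mu B|\geq H(g)|B|$; since $H(g)\neq H(f)$ in general, the main theorem does not apply directly.

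I would then apply the same Freiman--Ruzsa reduction as in the algebraic-integer case: assuming $|A+\lambda A| \leq K|A|$, one shows that $A$ is (up to controlled loss) contained in a GAP inside the lattice $\Lambda := \mathbb{Z}[\mu]$ of rank $d$, approximately stable under both multiplication by $q$ and by $\mu$. Under the conjugate embedding $\Lambda \hookrightarrow \mathbb{R}^d$, multiplication by $\mu$ becomes a linear operator $\T$ with characteristic polynomial $g$, while multiplication by $q$ is the scalar $q \cdot I$ with image $q\Lambda$ a sublattice of index $q^d$. Passing to the continuous limit yields a compact $K\subset \mathbb{R}^d$ with $\operatorname{vol}(K)\approx |A|\cdot\operatorname{covol}(\Lambda)$, and the continuous result of \cite{KP} applied to the operator $\T/q$ (multiplication by $\lambda$, with eigenvalues $\lambda_i$) gives
\[
\operatorname{vol}(qK+\T K) \;=\; q^d\operatorname{vol}\bigl(K+(\T/q)K\bigr) \;\geq\; q^d\prod_{i=1}^d(1+|\lambda_i|)\operatorname{vol}(K) \;=\; q^{d-1}H(f)\operatorname{vol}(K).
\]

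The main obstacle is the continuous-to-discrete conversion, since the continuous volume bound exceeds the required discrete bound by a factor $q^{d-1}$. This overshoot reflects the sparsity of $qA\subset q\Lambda$ inside $\Lambda$: the sumset $qA+\T A$ is confined to those cosets of $q\Lambda$ in $\Lambda$ that are met by $\T A$. The key combinatorial step is to establish that within each such coset the translated copies of $qA$ (of full density in $q\Lambda$) together with the relative geometry of $\T A$ modulo $q\Lambda$ produce the sharp bound $|qA+\T A|\geq H(f)|A|(1-o(1))$, absorbing precisely the $q^{d-1}$ factor. For $d=1$ (Bukh's rational case) and for $q=1$ (algebraic integers) this factor is trivially one, but for $d\geq 2$ with $q\geq 2$ one needs a higher-rank analog of Bukh's Frobenius-style count on $p\mathbb{Z}+q\mathbb{Z}$, combined with a lattice-theoretic statement on the reduction $\Lambda\to \Lambda/q\Lambda$ of the operator $\T$, whose image is constrained by the irreducibility of $f$. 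Tightness of the final bound is witnessed by the balanced GAP $\{i_0+i_1\mu+\cdots+i_{d-1}\mu^{d-1} \colon 0 \leq i_k < n_k\}$ with side lengths proportional to $1,|\mu|,\ldots,|\mu|^{d-1}$.
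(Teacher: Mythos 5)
First, note what you are trying to prove: in the paper this statement is only a \emph{conjecture} for general algebraic $\lambda$; the paper proves it (as Theorem \ref{thm:main-number}) only when $\lambda$ is an algebraic \emph{integer}, i.e. exactly the case $q=|a_d|=1$ of your setup, by reducing to $A\subset\mathbb{Z}[\lambda]$, where multiplication by $\lambda$ is an endomorphism of the rank-$d$ lattice, and then invoking Theorem \ref{thm:main-operator}. The upper bound for general algebraic $\lambda$ is from \cite{KP}. So there is no proof in the paper for you to match, and your proposal does not supply one either.

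The genuine gap is precisely the step you flag as "the key combinatorial step": after the dilation $|A+\lambda A|=|qA+\mu A|$ with $\mu=q\lambda$, you need the asymmetric bound $|qA+\mu A|\geq |q|\prod_i(1+|\lambda_i|)\cdot|A|-o(|A|)$ inside $\Lambda=\mathbb{Z}[\mu]$, and no argument is given for it. The continuous inequality applied to $\mathcal{T}/q$ (multiplication by $\lambda$) overshoots by the factor $q^{d-1}$, exactly because $qA$ lives in the index-$q^d$ sublattice $q\Lambda$ while $\mathcal{T}A$ spreads over cosets of $q\Lambda$; converting volume back to cardinality then loses this factor, and recovering it is not a routine adaptation of the paper's machinery. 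The paper's whole pipeline (Lemma \ref{lem:shift-in-centred-GAP}, Lemma \ref{lem:dense-subset}, Lemma \ref{lem:density-functions-inequality}) is built for a single integral operator acting on one lattice, i.e. for sums $A+\mathcal{T}A$ with $\mathcal{T}\in\End(\mathbb{Z}^d)$; the pair $(q\cdot\operatorname{Id},\,$ multiplication by $\mu)$ cannot be reduced to that form without reintroducing non-integrality, which is why the non-monic case (beyond $d=1$, Bukh, and $\lambda=(p/q)^{1/d}$, Conlon--Lim) remains open. A Frobenius/coset-counting analog of Bukh's argument in higher rank is a plausible research direction, but as written it is an unproven assertion, not a proof; the final "tightness" GAP is likewise only heuristic (the sharp upper-bound constructions come from \cite{KP} and require more care than side lengths $1,|\mu|,\dots,|\mu|^{d-1}$ when $\mu$ has conjugates of different moduli).
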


The main goal of the paper is to prove Conjecture \ref{conj:main-operator} for the case of $\mathcal{T}\in \End(\mathbb{Z}^d)$ and as a corollary, prove Conjecture \ref{conj:main-number} for the case of algebraic integer $\lambda$. We prove the following

\begin{theorem}\label{thm:main-operator}
Let  $\mathcal{T}\in\End(\mathbb{R}^d)$ be a linear operator such that $\mathcal{T}(\mathbb{Z}^d)\subset\mathbb{Z}^d$. Then 
\[
\liminf_{|A|\rightarrow\infty,\, A\subset\mathbb{R}^d} \frac{|A+\mathcal{T} A|}{|A|} = H^\circ(\mathcal{T}).
\]
\end{theorem}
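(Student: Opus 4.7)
The matching upper bound $\liminf \leq H^\circ(\mathcal{T})$ is from \cite{KP}, so only the lower bound $\liminf \geq H^\circ(\mathcal{T})$ needs to be shown. I would argue by contradiction, assuming a sequence $A_n\subset\mathbb{R}^d$ with $|A_n|\to\infty$ and $|A_n+\mathcal{T}A_n|\leq (H^\circ(\mathcal{T})-\varepsilon)|A_n|$ for some $\varepsilon>0$. If $\mathcal{T}$ is singular then $x$ divides the characteristic polynomial $f$ of $\mathcal{T}$, so $H^\circ(\mathcal{T})=1$ and the statement is trivial; so assume $\mathcal{T}$ is invertible, whence $|\mathcal{T}A_n|=|A_n|$. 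Pl\"{u}nnecke--Ruzsa then gives a doubling bound $|A_n+A_n|\leq K|A_n|$ for some $K=K(\varepsilon,d)$, and a strong form of Freiman's theorem embeds a positive-density subset of $A_n$ inside a proper generalized arithmetic progression $\phi_n(B_n)$ with $\phi_n\colon\mathbb{Z}^D\to\mathbb{R}^d$ linear, $B_n\subset\mathbb{Z}^D$ a box, and $D$ bounded in terms of $K$.

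The key structural step is to produce a lattice model carrying an integer action of $\mathcal{T}$. By Cayley--Hamilton the subgroup $\widetilde\Gamma_n=\sum_{k=0}^{d-1}\mathcal{T}^k\phi_n(\mathbb{Z}^D)\subset\mathbb{R}^d$ is finitely generated (of some rank $D'\leq dD$) and $\mathcal{T}$-invariant. Fixing an isomorphism $\widetilde\Gamma_n\cong\mathbb{Z}^{D'}$ lifts $\mathcal{T}$ to an integer operator $\widetilde{\mathcal{T}}\in\End(\mathbb{Z}^{D'})$, and the inclusion $\widetilde\Gamma_n\hookrightarrow\mathbb{R}^d$ extends to a surjective $\mathbb{R}$-linear map $\Phi_n\colon\mathbb{R}^{D'}\to\mathbb{R}^d$ with $\Phi_n\widetilde{\mathcal{T}}=\mathcal{T}\Phi_n$. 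Since $\ker\Phi_n$ is $\widetilde{\mathcal{T}}$-invariant with quotient $\mathcal{T}$, the characteristic polynomial of $\widetilde{\mathcal{T}}$ factors over $\mathbb{Z}$ as $f\cdot h$ with $h\in\mathbb{Z}[x]$ monic. In particular the eigenvalues of $\mathcal{T}$ are a subset of those of $\widetilde{\mathcal{T}}$, so $H(\widetilde{\mathcal{T}})\geq H(\mathcal{T})$.

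I would then invoke the structural theorem on dense subsets of a hypercubic lattice to pass, by an appropriate rescaling of the lifted sets $\widetilde A_n\subset B_n$, to a compact set $K\subset\mathbb{R}^{D'}$ of positive Lebesgue measure with $|K+\widetilde{\mathcal{T}}K|/|K|\leq H^\circ(\mathcal{T})-\varepsilon$. The generalized continuous inequality of \cite{KP} then yields $|K+\widetilde{\mathcal{T}}K|\geq H(\widetilde{\mathcal{T}})\cdot|K|$. Combining this with $H(\widetilde{\mathcal{T}})\geq H(\mathcal{T})=\prod_{g\mid f}H(g)^{m_g}\geq\min_{g\mid f}H(g)=H^\circ(\mathcal{T})$, where $g$ ranges over the monic irreducible integer factors of $f$ with multiplicities $m_g$ and each factor satisfies $H(g)\geq 1$, yields the desired contradiction.

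The main obstacle I anticipate lies in the rescaling step. A naive uniform rescaling of a non-cubical box to the unit cube conjugates $\widetilde{\mathcal{T}}$ by a diagonal matrix and destroys the factorization of its characteristic polynomial, so the hypercubic-lattice structural theorem must be applied in an operator-equivariant manner that nevertheless produces a limit set of positive measure. Arranging, at the Freiman step, that the containing GAP admits enlargement to a $\mathcal{T}$-invariant lattice without severely diluting the density of $A_n$, and meshing this enlargement with the rescaling, is the technically most delicate point of the argument.
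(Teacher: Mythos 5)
Your high-level architecture (Pl\"unnecke + Freiman to get a bounded-dimensional lattice model, a companion-matrix-type lift $\widetilde{\mathcal{T}}$ commuting with the projection, then the continuous inequality from \cite{KP}) matches the paper's, but the plan has two genuine gaps, one of which is fatal as stated.

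The fatal gap is the absence of any mechanism for sets concentrated on (unions of cosets of) $\mathcal{T}$-invariant sublattices. This is not a technicality of the rescaling step: it is precisely the regime in which $H^\circ(\mathcal{T})<H(\mathcal{T})$ and in which the extremal examples live (e.g.\ for $\mathcal{T}=\mathrm{diag}(2,3)$ one has $H^\circ=3<12=H$, achieved by $A$ lying on the invariant line). For such $A_n$ no full-dimensional compact set $K$ of positive measure with the same sumset ratio exists, so the contradiction with $\mu(K+\widetilde{\mathcal{T}}K)\geq H(\widetilde{\mathcal{T}})\mu(K)$ can never be reached; your closing chain $H(\widetilde{\mathcal{T}})\geq H(\mathcal{T})\geq H^\circ(\mathcal{T})$ proves an inequality between constants but does not address this. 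The paper handles it by inducting on the dimension: it tests whether $w,\mathcal{T}w,\dots,\mathcal{T}^{d-1}w$ (for $w$ the longest GAP direction) are linearly dependent, and if so decomposes $A$ into pieces lying in cosets of the invariant subspace $\alpha$ they span, proves that the sets $A_i+\mathcal{T}A_i$ are pairwise disjoint (using invertibility of $\mathrm{Id}+\mathcal{T}$), and applies the inductive bound for $\mathcal{T}_{|\alpha}$, using $H^\circ(\mathcal{T}_{|\alpha})\leq H^\circ(\mathcal{T})$ from Proposition \ref{prop:h-h-0-relation}. Only in the genuinely full-rank case does it reduce to a dense subset of a box. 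Relatedly, to make the lattice model faithful (so that additive relations among GAP coordinates reflect actual relations involving $\mathcal{T}$) the paper needs a strengthened Freiman theorem producing a $k$-proper GAP containing \emph{all} of $A\cup\mathcal{T}A$ for a large prescribed $k$; an ordinary proper GAP containing only a positive-density subset would both break the disjointness/companion-matrix arguments and lose a multiplicative constant in the final bound, which is not affordable for an asymptotically tight result.

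The second gap is the discrete-to-continuous passage itself. Naively fattening a dense subset of a box does not give $\mu(K+\widetilde{\mathcal{T}}K)/\mu(K)\leq |A+\mathcal{T}A|/|A|+o(1)$: translates of the fattened cells overlap in the sumset, and one can lose a factor as large as $H(\mathcal{T})$. The paper instead proves a weighted version of the continuous inequality (Lemma \ref{lem:density-functions-inequality}: if $h(x+\mathcal{T}y)\geq f(x)$ pointwise then $\int h\geq H(\mathcal{T})\int f$) and feeds into it local densities of $A$ and $A+\mathcal{T}A$ on a mesh of small cubes; the hypothesis $h(x+\mathcal{T}y)\geq f(x)$ is verified using the topological $\delta$-density of $\mathcal{T}A'$ supplied by the structural Lemma \ref{lem:structural}. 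Your plan would need to be reformulated in these density-function terms to go through even in the full-rank case.
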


\begin{theorem}\label{thm:main-number}
Let $\lambda\in \mathbb{C}$ be an algebraic integer number
with minimal polynomial $f$. Then
\begin{equation}\label{eq:algebraic}
    \liminf_{|A|\rightarrow\infty,\, A\subset\mathbb{C}} \frac{|A+\lambda A|}{|A|} = H(f)=
    \prod (1+|\lambda_i|),
\end{equation}
where $\lambda_i$'s are all algebraic conjugates of $\lambda$.
\end{theorem}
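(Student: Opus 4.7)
The plan is to deduce Theorem \ref{thm:main-number} from Theorem \ref{thm:main-operator} by applying the latter to a single fixed $d$-dimensional operator, namely multiplication by $\lambda$ on the ring of integers $\mathcal{O}_K$ of $K=\mathbb{Q}(\lambda)$; the matching upper bound $\liminf\le H(f)$ is already proved in \cite{KP}, so only the lower bound needs attention.

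Let $K=\mathbb{Q}(\lambda)$ be of degree $d=\deg f$ with ring of integers $\mathcal{O}_K$, a free $\mathbb{Z}$-module of rank $d$ containing $\lambda$. Fix a $\mathbb{Z}$-basis of $\mathcal{O}_K$ to identify $\mathcal{O}_K\cong \mathbb{Z}^d$; then multiplication by $\lambda$ defines an operator $\mathcal{T}\in \End(\mathbb{R}^d)$ preserving $\mathbb{Z}^d$, whose characteristic polynomial equals the minimal polynomial $f$ of $\lambda$ over $\mathbb{Q}$. Consequently $H^\circ(\mathcal{T})=H(f)$, and Theorem \ref{thm:main-operator} gives
\[
\liminf_{|B|\to\infty,\, B\subset\mathbb{R}^d}\frac{|B+\mathcal{T}B|}{|B|} \;=\; H(f).
\]

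To reduce the case of an arbitrary finite $A\subset\mathbb{C}$ to this fixed $d$-dimensional setting, let $V\subset\mathbb{C}$ be the $K$-linear span of $A$, with $\dim_K V=m\le |A|$, and identify $V\cong K^m$. Because $K$ is infinite, the set of $K$-linear functionals $\phi\colon K^m\to K$ that are injective on $A$ is nonempty (it is the complement of finitely many proper $K$-linear subspaces of the dual, one for each pair of distinct elements $a,a'\in A$). Being $K$-linear, such a $\phi$ commutes with multiplication by $\lambda$, whence
\[
\phi(A)+\lambda\phi(A)=\phi(A+\lambda A)\quad\text{and}\quad |\phi(A)|=|A|.
\]
In particular $|A+\lambda A|\ge |\phi(A)+\lambda\phi(A)|$, and after clearing denominators in the finite set $\phi(A)\subset K$ by multiplying by a suitable positive integer $N$, we land in $\mathcal{O}_K\cong\mathbb{Z}^d$ without changing this ratio.

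Applying Theorem \ref{thm:main-operator} to $B:=N\phi(A)$, and using $|B|=|A|\to\infty$ so that the $o(1)$ in the theorem tends to zero, we conclude
\[
\frac{|A+\lambda A|}{|A|}\;\ge\;\frac{|B+\mathcal{T}B|}{|B|}\;\ge\; H(f)-o(1),
\]
completing the proof. The core content lies entirely in Theorem \ref{thm:main-operator}; the only subtlety here is the choice of the projection $\phi$, which is essential so that we invoke the theorem with an operator of fixed dimension $d$ and thereby dodge any a priori dimension dependence of the error term as $|A|\to\infty$.
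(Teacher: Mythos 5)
Your proposal is correct and follows essentially the same route as the paper: project an arbitrary $A\subset\mathbb{C}$ into $K=\mathbb{Q}(\lambda)$ by a generic $K$-linear functional (the paper cites \cite[Lemma 2.1]{KP} for exactly this step), clear denominators to land in a rank-$d$ lattice stable under multiplication by $\lambda$ (the paper uses $\mathbb{Z}[\lambda]$ where you use $\mathcal{O}_K$ — immaterial), and apply Theorem \ref{thm:main-operator} to the multiplication-by-$\lambda$ operator, whose characteristic polynomial is $\pm f$ so that $H^\circ(\mathcal{T})=H(f)$. Your explicit remarks on injectivity of the projection and on the fixed dimension $d$ making the error term uniform are accurate and match the paper's (terser) argument.
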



The rest of the paper is organised as follows. In Section 
\ref{sec:preliminary}
we relate $H^\circ(\mathcal{T})$ to $H(\mathcal{T})$ for $\mathcal{T}\in \End(\mathbb{Z}^d)$, deduce Theorem \ref{thm:main-number} from Theorem \ref{thm:main-operator}, and prove the upper bound in Theorem \ref{thm:main-operator}. Then in Section \ref{sec:reduction}, we reduce Theorem \ref{thm:main-operator} to the special case when the set $A$ is a subset of a $\mathbb{Z}^n\cap [0, N)^n$ of density at least $\eps=\eps(\mathcal{T})$. This reduction relies on a cirtain refinement of Freiman's theorem. Finally, in Section \ref{sec:box-dense}, we complete the proof of Theorem \ref{thm:main-operator} by using a structural lemma on the dense subsets of a hypercube, see Lemma \ref{lem:structural}, together with the continuous version of Theorem \ref{thm:main-operator}. 

\section{Preliminary observations}\label{sec:preliminary}

In this section we establish a relation between $H^\circ(\mathcal{T})$ and $H(\mathcal{T})$ for endomorphisms of the $\mathbb{Z}^d$ lattice, deduce Theorem \ref{thm:main-number} from Theorem \ref{thm:main-operator} and also prove an upper bound in Theorem \ref{thm:main-operator}.

\begin{proposition}\label{prop:h-h-0-relation}
Let $\mathcal{T}\in \End(\mathbb{Z}^d)$ then
\[
H^\circ(\mathcal{T}) = \min_{\alpha: \mathcal{T}\alpha\subset \alpha} H(\mathcal{T}_{|\alpha}),
\]
where the minimum is taken over all invariant subspaces $\alpha$ of $\mathbb{Q}^d$ and $\mathcal{T}$, as well as $\mathcal{T}_{|\alpha}$, is identified, with a slight abuse of notation, with its extensition to a linear operator on $\mathbb{Q}^d$ and $\mathbb{R}^d$. 
\end{proposition}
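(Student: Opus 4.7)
The plan is to prove two inequalities: (i) for every nonzero rational $\mathcal{T}$-invariant subspace $\alpha\subset\mathbb{Q}^d$ one has $H(\mathcal{T}|_\alpha)\geq H^\circ(\mathcal{T})$, and (ii) there exists such an $\alpha$ achieving equality. (I implicitly restrict the minimum in the proposition to nonzero invariant subspaces, since $\alpha=\{0\}$ would give the empty product $1$.)

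For (i), let $f=\chi_\mathcal{T}\in\mathbb{Z}[x]$ be the (monic) characteristic polynomial of $\mathcal{T}$, and let $\alpha\subset\mathbb{Q}^d$ be a nonzero $\mathcal{T}$-invariant subspace. Choosing a $\mathbb{Q}$-basis of $\alpha$ represents $\mathcal{T}|_\alpha$ by a rational matrix, so its characteristic polynomial $\chi$ is monic in $\mathbb{Q}[x]$ and divides $f$. Since $f$ is monic in $\mathbb{Z}[x]$, Gauss's lemma guarantees that its factorization into monic irreducibles over $\mathbb{Q}$ coincides with that over $\mathbb{Z}$; hence $\chi=\prod_j g_j^{m_j}$, where each $g_j\in\mathbb{Z}[x]$ is a monic irreducible divisor of $f$ and at least one $m_j\geq 1$. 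Since $H(g_j)=\prod_i(1+|\mu_{j,i}|)\geq 1$ for every monic integer polynomial, I obtain
\[
H(\mathcal{T}|_\alpha)=\prod_j H(g_j)^{m_j}\geq \min_j H(g_j)\geq H^\circ(\mathcal{T}).
\]

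For (ii), I would construct an explicit minimizer. Choose a monic irreducible divisor $g\in\mathbb{Z}[x]$ of $f$ that attains $H(g)=H^\circ(\mathcal{T})$. Since $g\mid f$, the operator $g(\mathcal{T})\in\End(\mathbb{Q}^d)$ has nonzero kernel; pick any nonzero $v$ in it and set $\alpha:=\mathbb{Q}[\mathcal{T}]v$. This is a nonzero rational $\mathcal{T}$-invariant subspace. The annihilator ideal of $v$ in $\mathbb{Q}[x]$ contains $g$ and is proper (since $v\neq 0$), so by irreducibility of $g$ it equals $(g)$. Therefore $\alpha\cong\mathbb{Q}[x]/(g)$ as a $\mathbb{Q}[\mathcal{T}]$-module, $\dim_\mathbb{Q}\alpha=\deg g$, and $\chi_{\mathcal{T}|_\alpha}=g$. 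Consequently $H(\mathcal{T}|_\alpha)=\prod_i(1+|\mu_i|)=H(g)=H^\circ(\mathcal{T})$, where $\mu_i$ are the roots of $g$.

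This is essentially routine linear algebra once phrased module-theoretically. The only subtle point is the Gauss's-lemma descent in (i) ensuring that $\chi_{\mathcal{T}|_\alpha}\in\mathbb{Z}[x]$ and factors through integer irreducible divisors of $f$; without it one could imagine a restriction whose characteristic polynomial is rational but has no integer irreducible factor, bypassing the definition of $H^\circ$.
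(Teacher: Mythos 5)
Your proof is correct and follows essentially the same route as the paper: the lower bound via the fact that the characteristic polynomial of $\mathcal{T}|_\alpha$ is a monic integer divisor of $f$, and the upper bound via the cyclic invariant subspace generated by a vector in $\ker g(\mathcal{T})$. If anything, your write-up is slightly more careful in direction (i), where the paper passes directly from ``$\chi_{\mathcal{T}|_\alpha}$ divides $f$ in $\mathbb{Z}[x]$'' to the inequality without spelling out the factorization $\chi=\prod_j g_j^{m_j}$ and the bound $H(g_j)\geq 1$.
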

\begin{proof}
Let $f$ be the characteristic polynomial of $\mathcal{T}$. We first show that the minimum is at least $H^\circ(\mathcal{T})$. Take any invariant subspace $\alpha$, and let $g$ be the characteristic polynomial of $\mathcal{T}_{|\alpha}$. Clearly $g$ is a divisor of $f$, and taking rational basis of $\alpha$ and writing the matrix of $\mathcal{T}$ in this basis one sees that $g$ has rational coefficients. Moreover, since $\pm g$ is monic and all its roots are algebraic integers, it, in fact, has integer coefficients. So we have
\[
\min_{\alpha: \mathcal{T}\alpha\subset \alpha} H(\mathcal{T}_{|\alpha}) \geq \min_{g|f} H(g) =: H^\circ(\mathcal{T}).
\]

In the other direction, let $g$ be an irreducible divisor of $f$ of degree $m$ with integer coefficients. In particular, $\pm g$ is monic. Take the subspace $\beta:=\operatorname{Ker}(g(\mathcal{T}))$ of $\mathbb{Q}^d$ which is non-trivial since $g(\mathcal{T})$ is singular. Then take an arbitrary non-zero vector $v\in \beta$ and consider the subspace $\beta':=\langle v, \mathcal{T}v,\dots, \mathcal{T}^{m-1} v \rangle$ which is an invariant subspace of $\mathcal{T}$, as follows from the fact that $g(\mathcal{T}) v = 0$. Note that the characteristic polynomial of $\mathcal{T}_{|\beta'}$ is $\pm g$ since any eigenvalue of $\mathcal{T}_{|\beta'}$ is a root of $g$,  the dimension of $\mathcal{T}_{|\beta'}$ is at most $m$, and $g$ is monic. Hence, we have
\[
H^\circ(\mathcal{T}):=\min_{g|f} H(g)\geq \min_{\alpha: \mathcal{T}\alpha\subset \alpha} H(\mathcal{T}_{|\alpha}).
\]

\end{proof}
\begin{remark}\label{rem:smallest-invariant-subspace}
    Since $H(\mathcal{T}_{|\alpha})$ is non-decreasing in $\alpha$ under the partial ordering given by inclusion of subspaces, the invariant subspace $\alpha$ of minimal possible dimension among those with the minimal value of $H(\mathcal{T}_{|\alpha})$ additionally does not have any non-trivial invariant subspaces of $\mathcal{T}_{|\alpha}$. 
\end{remark}
\begin{proof}[Proof of Theorem \ref{thm:main-number} given Theorem \ref{thm:main-operator}]
The observation made in \cite[Lemma 2.1]{KP} implies that we may work with subsets of $\mathbb{Q}[\lambda]$, namely, that
\[
\liminf_{|A|\rightarrow\infty,\, A\subset\mathbb{C}} \frac{|A+\lambda A|}{|A|} = \liminf_{|A|\rightarrow\infty,\, A\subset\mathbb{Q}[\lambda]} \frac{|A+\lambda A|}{|A|} = \liminf_{|A|\rightarrow\infty,\, A\subset\mathbb{Z}[\lambda]} \frac{|A+\lambda A|}{|A|},
\]
where the last equality follows by dilating $A$. Since $\lambda$ is an algebraic integer, the linear operator $\mathcal{T}_\lambda$ defined by $x\mapsto \lambda x$ is an endomorphism of $\mathbb{Z}[\lambda]$. Furthermore, $\mathcal{T}_\lambda$ does not have invariant subspaces and its characteristic polynomial is equal, up to a sign, to the minimal polynomial $f$ of $\lambda$. So by Theorem \ref{thm:main-operator} we have 
\[
\liminf_{|A|\rightarrow\infty,\, A\subset\mathbb{Z}[\lambda]} \frac{|A+\lambda A|}{|A|} = H^\circ(\mathcal{T}_\lambda) = H(f), 
\]
as desired.
\end{proof}

Recall that a very similar argument is used 
in \cite[Proposition 1]{KP}.


\begin{proof}[Proof of the upper bound in Theorem \ref{thm:main-operator}] 
Using Remark \ref{rem:smallest-invariant-subspace}, choose a $\mathcal{T}$-invariant subspace $\alpha\subset \mathbb{Q}^d$ satisfying $H^\circ(\mathcal{T}) = H(\mathcal{T}_{|\alpha})$, and such that $\alpha$ has no non-trivial invariant subspaces of $\mathcal{T}_{|\alpha}$. Again, with some abuse of notation we identify $\alpha$ with a subspace of $\mathbb{R}^d$. 
It then suffices to construct large sets $A\subset \alpha$ such that $|A+\mathcal{T}_{|\alpha} A| \geq H(\mathcal{T}_{|\alpha})\cdot |A| - o(|A|)$, since $\mathcal{T}$ and $\mathcal{T}_{|\alpha}$ coinside on $\alpha$. So passing to $\mathcal{T}_{|\alpha}$ if needed, without loss of generality we may assume that the operator $\mathcal{T}$ itself has no non-trivial invariant subspaces and so $H^\circ(\mathcal{T}) = H(\mathcal{T})$.


Fix some small $\eps>0$. As explained after the proof of \cite[Theorem 2]{KP}, the inequality $\mu_\star(\Omega+\mathcal{T}\Omega)/\mu_\star(\Omega)
\geq H(\mathcal{T})$ is sharp and we can consider a convex compact set $\Omega\subset \mathbb{R}^d$ which satisfies $\mu(\Omega+\mathcal{T}\Omega)/\mu(\Omega)\leq H(\mathcal{T})+\eps$. Take $M$ large enough and consider the set $\Omega_M:=\mathbb{Z}^d\cap M\cdot \Omega$. We have $|\Omega_M|=\mu(\Omega)\cdot M^d+o(M^d)$, and since $\Omega_M+\mathcal{T}\Omega_M\subset (\Omega+\mathcal{T}\Omega)\cap\mathbb{Z}^d$ we have $|\Omega_M+\mathcal{T}\Omega_M|\leq \mu(\Omega+\mathcal{T}\Omega)\cdot M^d+o(M^d)$, where we used the fact that both $\Omega$ and $\Omega+\mathcal{T}\Omega$ are convex to approximate the number of integer points in their dilates. This immediately implies that $|\Omega_M+\mathcal{T}\Omega_M|/|\Omega_M|\leq H(\mathcal{T})+\eps+o_M(1)$, and since we can take $\eps$ to be arbitrary small, the upper bound follows.
\end{proof}

\section{Reduction to the case of a dense subset of a box}\label{sec:reduction}


To prove the lower bound in Theorem \ref{thm:main-operator} we first want to reduce the problem to the case of a set $A$ which forms a dense subset of a cube, i.e to the following statement
\begin{lemma}\label{lem:dense-subset}
    Let $\mathcal{T}:\mathbb{Z}^d\rightarrow\mathbb{Z}^d$ be a linear operator and $\eps>0$. For any subset $A\subset \{0, 1, \dots, N-1\}^d$ of size $|A|\geq \eps \cdot N^d$ we have 
    \[
    |A+\mathcal{T} A| \geq H^\circ(\mathcal{T})\cdot |A| - o(|A|),
    \]
    where the implied constant in $o(\cdot)$ may depend both on $\mathcal{T}$ and $\eps$.  
\end{lemma}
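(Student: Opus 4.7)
My plan is to transfer the discrete inequality to the continuous bound $\mu(\Omega+\mathcal{T}\Omega)\geq H(\mathcal{T})\mu(\Omega)$ established in \cite{KP}, using the observation that $H^\circ(\mathcal{T})\leq H(\mathcal{T})$: indeed $H^\circ(\mathcal{T})$ is realised as the product of factors $1+|\lambda_i|$ over the eigenvalues lying in an integer irreducible factor of the characteristic polynomial of $\mathcal{T}$, which is a subset of all eigenvalues, and every factor is at least $1$. Hence it suffices to recover, in the discrete setting, the bound $H(\mathcal{T})|A|-o(|A|)$.

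To carry out the transfer I would fix a mesoscopic scale $M$ with $1\ll M\ll N$ and partition $\{0,\dots,N-1\}^d$ into $M^d$ macro-cells of side $N/M$. Let $Y\subset\{0,\dots,M-1\}^d$ index the cells containing at least a fraction $\tfrac{\eps}{2}$ of their lattice points in $A$; by averaging $|Y|\geq\tfrac{\eps}{2}M^d$ and all but $o(|A|)$ points of $A$ lie in these cells. The structural lemma (Lemma~\ref{lem:structural}) would then furnish a subset $A'\subset A$ with $|A'|\geq(1-o(1))|A|$ that at the mesoscopic scale $N/M$ is effectively identified with the continuous set $\Omega:=\tfrac{1}{M}\bigcup_{y\in Y}(y+[0,1)^d)\subset[0,1)^d$ of measure $|Y|/M^d\geq\tfrac{\eps}{2}$. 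Applying the continuous bound gives $\mu(\Omega+\mathcal{T}\Omega)\geq H(\mathcal{T})\mu(\Omega)$. To transfer back, observe that for each macro-sum $z=y_1+\mathcal{T}y_2$ with $(y_1,y_2)\in Y^2$, the sums $a+\mathcal{T}b$ with $a$ in cell $y_1$ and $b$ in cell $y_2$ form the translate by $\tfrac{N}{M}z$ of the fixed lattice set $\{0,\dots,N/M-1\}^d+\mathcal{T}\{0,\dots,N/M-1\}^d$. As $z$ ranges over $Y+\mathcal{T}Y$ this yields, up to a boundary discrepancy of order $o(N^d)$ for suitable $M$, essentially all integer points in $N(\Omega+\mathcal{T}\Omega)$, leading to
\[
|A+\mathcal{T}A|\geq|A'+\mathcal{T}A'|\geq N^d\mu(\Omega+\mathcal{T}\Omega)-o(N^d)\geq H(\mathcal{T})|A|-o(|A|)\geq H^\circ(\mathcal{T})|A|-o(|A|).
\]

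The main obstacle is the transfer step itself. A naive smoothing of $A$ into a continuous body would incur a multiplicative loss of order $\mu([0,1]^d+\mathcal{T}[0,1]^d)$ coming from the fattening of the sumset, which would be fatal for a tight bound. The structural lemma is essential here: it provides the graininess at the mesoscopic scale $N/M$ needed so that the discrete sumset tiles the continuous Minkowski sum with negligible overlap, allowing the scale $N/M$ to cancel on both sides. Simultaneously arranging that the error from passing $A\to A'$ and the boundary corrections from the micro-cube lattice are both $o(|A|)$ — by balancing the three scales $1\ll M\ll N$ (roughly $M\ll N^{1/(d+1)}$ suffices for the boundary) — is the most delicate technical point.
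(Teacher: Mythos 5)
Your reduction of $H^\circ(\mathcal{T})$ to $H(\mathcal{T})$ and the overall strategy of comparing with the continuous bound from \cite{KP} match the paper, but the transfer-back step contains a genuine gap that the rest of your sketch does not repair. You assert that for $y_1,y_2\in Y$ the sums $a+\mathcal{T}b$ with $a\in A\cap(\text{cell }y_1)$ and $b\in A\cap(\text{cell }y_2)$ form the full translate $\tfrac{N}{M}(y_1+\mathcal{T}y_2)+\bigl(\{0,\dots,N/M-1\}^d+\mathcal{T}\{0,\dots,N/M-1\}^d\bigr)$. This is only true when $A$ contains every lattice point of those cells; your $Y$ only guarantees density at least $\eps/2$ per cell. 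A subset of a cell of density $\eps/2$ that is, say, a coset of a sublattice of bounded index produces a sumset confined to a single coset of that sublattice, so $A+\mathcal{T}A$ can miss all but an $O(\eps)$ fraction of the integer points of $N(\Omega+\mathcal{T}\Omega)$, and the chain of inequalities at the end of your plan collapses. Your closing paragraph correctly identifies the fattening problem, but the claim that the structural lemma makes the discrete sumset tile the continuous Minkowski sum is not something that lemma provides, nor is it true.

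The paper's actual mechanism differs in two essential ways. First, it never tries to cover the continuous sumset; it proves a weighted generalisation of the continuous inequality (Lemma \ref{lem:density-functions-inequality}): if $h(x+\mathcal{T}y)\geq f(x)$ pointwise, then $\int h\geq H(\mathcal{T})\int f$. Here $f$ is the local density of $A$ on small cubes and $h$ is the local density of $A+\mathcal{T}A$, so only densities, not full coverage, need to be transported. Second, the pointwise inequality $h(x+\mathcal{T}y)\geq f(x)$ is obtained by translating an entire small cube's worth of $A$ by a single element $\mathcal{T}y'$ with $y'\in A$; finding such a $y'$ with $\mathcal{T}y'$ within $\delta$ times the cell side of the required grid point is exactly what the topological $\delta$-density of Lemma \ref{lem:structural} (applied to $\mathcal{T}A$, not to $A$) delivers --- ordinary $\eps/2$-density per macro-cell is not enough for this. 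Without these two ingredients the mesoscopic discretisation alone cannot yield the tight constant.
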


To deduce Theorem \ref{thm:main-operator} from Lemma \ref{lem:dense-subset} we need a strong version of Freiman's theorem which we now state and prove.


\subsection{Freiman's theorem}\label{subsec:Freiman}

\begin{definition}
Let $(G, +)$ be an abelian group. A set $P\subset G$ is a \emph{ generalised arithmetic progression}\footnote{Strictly speaking, a generalised arithmetic progressions is not just a set but the collection of data $(G; P; d; v_0,v_1,\dots,v_d;L_1,\dots L_d)$ but this would be cumbersome to write so with some abuse of notation we just write $P$ to denote this collection of data.} (GAP) of dimension $d\geq 1$ if it has the form
\begin{equation}\label{eq:GAP}
P=\left\{v_0 +\ell_1v_1+\dots+\ell_d v_d \suchthat 0\leq \ell_j \leq L_j \right\},
\end{equation}
where $v_0, v_1,\dots, v_d \in G, L_1, L_2, \dots, L_d\in \mathbb{Z}_+$. The generalised arithmetic progression $P$ is said to be \emph{proper} if all sums in \eqref{eq:GAP} are distinct (in which case $|P|=(L_1+1)(L_2+1)\dots (L_d+1)$). We say that $P$ is \emph{$k$-proper} if 
\begin{equation}\label{eq:k-GAP}
k\cdot P:=\left\{v_0 +\ell_1v_1+\dots+\ell_d v_d \suchthat 0\leq \ell_j \leqslant kL_j \right\},
\end{equation}
has all elements on the RHS distinct, i.e. if $P$ is proper and $|k\cdot P| = \prod_{j=1}^d(kL_j+1))$.
\end{definition}
It will be convenient for us to work with GAPs which are (almost) symmetric with respect to the origin. So we use the following 

\begin{definition}\label{def:centered-GAP}
    Let $(G, +)$ be an abelian group. We call a set $P\subset G$ is a \emph{centered generalised arithmetic progression} (c-GAP) of dimension $d\geq 1$ if it has the form
\begin{equation}\label{eq:c-GAP}
P=\left\{\ell_1v_1+\dots+\ell_d v_d \suchthat -L_j\leq \ell_j \leqslant L_j \right\},
\end{equation}
where $v_0, v_1,\dots, v_d \in G, L_1, L_2, \dots, L_d\in \mathbb{Z}_+$. For $k\geq 1$ and a centered GAP $P$ we write
\begin{equation}\label{eq:k-c-GAP}
k\star P:=\left\{\ell_1v_1+\dots+\ell_d v_d \suchthat -kL_j\leq \ell_j \leqslant kL_j \right\}.
\end{equation}
We say that $P$ is \emph{$k$-proper} if all elements on the RHS of \eqref{eq:k-c-GAP} are pairwise distinct. 
\end{definition}
\begin{rem}
    Note that any centered GAP $P$ can be seen as a GAP with $v_0 = -\sum L_j v_j$ and in this case $k\cdot P\not= k\star P$ for $k\geq 2$. However, the notion of being $k$-proper coincides for these two points of view, and this slight ambiguity should hopefully cause no confusion. 
\end{rem}



The following result is taken from \cite[Theorem 1.1]{Green2007}.
\begin{lemma}\label{lem:Freiman}

For every $K>0$ there exist constants $d=d(K)$ and $f=f(K)$ such that for any abelian group $G$ and any subset $A\subset G$ with doubling constant at most $K$ (i.e. such that $|A+A|\leq K|A|$) there exists a proper arithmetic progression $P\subset G$ containing $A$ which has dimension at most $d(K)$ and size at most $f(K)|A|$.
\end{lemma}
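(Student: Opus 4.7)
The plan is to deduce this from the Freiman–Ruzsa theorem for arbitrary abelian groups, following Ruzsa's original strategy with the torsion-handling refinements of Green and Ruzsa. The first step is to convert the single doubling hypothesis $|A+A|\leq K|A|$ into uniform control of iterated sumsets via the Plünnecke–Ruzsa inequality, which gives $|kA-\ell A|\leq K^{k+\ell}|A|$ for every $k,\ell\geq 0$. This bound is what allows every further step to depend on $K$ alone.

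Next I would apply Ruzsa's modeling lemma to replace a large subset $A'\subseteq A$ by a Freiman-isomorphic copy (of order $8$, say) inside a finite cyclic group $\mathbb{Z}/N\mathbb{Z}$ with $N=O_K(|A|)$. Inside this finite model, Bogolyubov's lemma — a short Fourier calculation — shows that $2A'-2A'$ contains a Bohr set $B(S,1/4)$ with $|S|=K^{O(1)}$. Applying Minkowski's second theorem on successive minima to the lattice cut out by the characters in $S$ then extracts a proper symmetric GAP $Q\subseteq B(S,1/4)$ of dimension $|S|$ and size $\geq c(K)|A|$, hence contained in $2A-2A$ once we translate back. Finally, Ruzsa's covering lemma provides a set $X\subseteq G$ with $|X|\leq K^{O(1)}$ such that $A\subseteq X+Q-Q$, and the right-hand side is then contained in a single GAP of dimension $\leq d(K)$ and size $\leq f(K)|A|$; a minor clean-up via Bilu's lemma (any GAP lies in a proper GAP with comparable parameters) ensures properness of the final progression.

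The main obstacle is carrying out the modeling plus Bogolyubov–Minkowski step in an arbitrary abelian group: torsion can obstruct embedding into a cyclic model, and in full generality the natural conclusion involves coset progressions rather than GAPs. The Green–Ruzsa refinement handles this precisely, and in the applications of the present paper we will work with $G=\mathbb{Z}^d$ (or a finitely generated torsion-free subgroup of $\mathbb{C}$), so the coset part is trivial and the clean GAP conclusion of the lemma as stated suffices. Since the result is being imported as a black box from \cite{Green2007}, in practice I would not reproduce these steps in the paper but simply cite them.
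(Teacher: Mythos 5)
Your proposal is correct and matches the paper's treatment: the paper offers no proof of this lemma, importing it directly as Theorem 1.1 of \cite{Green2007}, and your sketch (Pl\"unnecke--Ruzsa, Ruzsa modeling, Bogolyubov plus geometry of numbers, the covering lemma, then passing to a proper progression) is the standard proof of that theorem. Your caveat about torsion is also well taken: for a general abelian group the conclusion of \cite{Green2007} is a proper \emph{coset} progression, so the lemma as literally stated requires the finite-subgroup part to be trivial, which suffices here since the paper only invokes it for torsion-free $G$ (Lemma \ref{lem:Freiman-strong}).
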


We need the following strengthening of this theorem, which ensures that not only $P$ itself is proper but also its large multiple is proper. Note that in this case we require the group $G$ to be torsion-free. 

\begin{lemma}\label{lem:Freiman-strong}
Let $\gamma:\mathbb{N}\times\mathbb{N} \rightarrow \mathbb{N}$ be an arbitrary function. For any $K>0$ there exist constants $d=d(K)$ and $F=F(K, \gamma)$ such that for any torsion-free abelian group $G$ and any subset $A\subset G$ with doubling constant at most $K$ (i.e. such that $|A+A|\leq K|A|$) there exists a generalized arithmetic progression $P\subset G$ containing $A$ which has dimension at most $d(K)$, size at most $F(K, \gamma)|A|$, and is $k$-proper with $k:=\gamma(\lfloor|P|/|A|\rfloor, d(P))$, where $d(P)$ is the dimension of $P$.
\end{lemma}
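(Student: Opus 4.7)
The plan is to first apply the standard Freiman theorem (Lemma \ref{lem:Freiman}) to obtain a proper GAP containing $A$ with bounded dimension and size ratio, and then to iteratively upgrade this GAP to a $k$-proper one by reducing its dimension whenever $k$-properness fails. The torsion-freeness of $G$ will be essential: it ensures that resolving an integer relation among the generators of a GAP yields a lower-dimensional GAP with well-defined free generators rather than introducing torsion.

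I would first apply Lemma \ref{lem:Freiman} to $A$ to obtain a proper GAP $P_0 \supset A$ of dimension $d_0 \leq d(K)$ with $|P_0| \leq f(K)|A|$. Then I iterate: given a GAP $P_i = \{\sum_j \ell_j v_j^{(i)} \suchthat 0 \leq \ell_j \leq L_j^{(i)}\} \supset A$ of dimension $d_i$ with $|P_i| \leq F_i |A|$, I define
\[
k_i := \max\{\gamma(m, d) \suchthat 1 \leq m \leq \lceil F_i \rceil,\ 1 \leq d \leq d(K)\},
\]
which upper bounds every possible value of $\gamma(\lfloor|P|/|A|\rfloor, d(P))$ that we could encounter at any later stage. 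If $P_i$ is $k_i$-proper, I terminate with $P := P_i$, and the output is automatically $\gamma(\lfloor|P|/|A|\rfloor, d(P))$-proper because that value is at most $k_i$. Otherwise some collision in $k_i \star P_i$ produces a nontrivial integer relation $\sum_j n_j v_j^{(i)} = 0$ in $G$ with $|n_j| \leq 2 k_i L_j^{(i)}$.

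To handle the latter case, I would use that torsion-freeness of $G$ implies the homomorphism $\phi_i : \mathbb{Z}^{d_i} \to G$, $e_j \mapsto v_j^{(i)}$, factors through a free quotient $\mathbb{Z}^{d_i}/\Lambda_i$ of rank $d_i - r < d_i$, where $\Lambda_i$ is the saturation of $\langle(n_j)\rangle$ inside $\mathbb{Z}^{d_i}$. Completing a basis of $\Lambda_i$ to a $\mathbb{Z}$-basis of $\mathbb{Z}^{d_i}$, taking the images of the complementary basis vectors under $\phi_i$ as the new generators $v_j^{(i+1)}$, and defining $P_{i+1}$ to be the smallest GAP in these generators containing the image of the box $\prod_j [0, L_j^{(i)}]$ yields a GAP $P_{i+1} \supset P_i$ of dimension $d_{i+1} = d_i - r$. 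A standard geometry-of-numbers estimate should then bound its side lengths polynomially in $k_i$ and $\max_j L_j^{(i)}$, giving $|P_{i+1}| \leq C(d_i, k_i) |P_i|$ for an explicit function $C$. Since $d_i$ strictly decreases, the process terminates in at most $d(K)$ iterations, and the final constant $F(K,\gamma)$ is the value $F_{d(K)}$ obtained from the recursion $F_{i+1} = C(d(K), k_i) F_i$, $F_0 = f(K)$, which is finite and depends only on $K$ and $\gamma$.

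The main obstacle I expect is the quantitative control of the size blow-up in the dimension-reduction step: I need to verify that folding a GAP along a short primitive integer relation $(n_j)$ with $|n_j| \leq 2 k_i L_j^{(i)}$ in a torsion-free group produces a GAP whose side lengths are bounded polynomially in $k_i$ and the original lengths, independently of the ambient group $G$. This reduces to estimating, via Minkowski's successive minima on $\Lambda_i$, the image of a rectangular box under a surjective integer-linear projection $\mathbb{Z}^{d_i} \to \mathbb{Z}^{d_i - r}$ that kills the prescribed short lattice vector; choosing a nearly orthogonal basis of $\Lambda_i$ and a reduced complementary basis should give the required polynomial bound.
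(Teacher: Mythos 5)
Your outer strategy is exactly the paper's: apply Freiman's theorem (Lemma \ref{lem:Freiman}) to get a proper GAP $P_0\supset A$ of bounded dimension and size, then iterate a test-and-reduce loop in which a failure of $k$-properness forces a strict drop in dimension with controlled size blow-up, terminating after at most $d(K)$ steps; your device of taking $k_i$ to be a maximum of $\gamma$ over all sizes and dimensions that could still occur is a harmless (in fact slightly cleaner) variant of the paper's choice $k_i:=\gamma(\lfloor|P_i|/|A|\rfloor,d_i)$. The one place you diverge is the dimension-reduction step itself: the paper simply invokes \cite[Theorem 3.40]{TaoWu2006} (progressions in torsion-free groups embed into proper progressions, with a dimension decrement when the progression is non-proper), whereas you propose to reprove it by extracting a short integer relation $\sum_j n_jv_j=0$ from the collision, quotienting $\mathbb{Z}^{d_i}$ by the saturation of that relation (this is precisely where torsion-freeness enters, and you place it correctly), and covering the projected box in a complementary basis. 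That is indeed the standard proof of the cited result, and the ``main obstacle'' you flag --- the polynomial-in-$k_i$ bound on the side lengths of the covering box, via successive minima or a reduced basis for the quotient lattice --- is exactly the content that the citation supplies; it is true and completable, but as written it remains a sketch rather than a proof, so if you do not want to quote Tao--Vu you would need to carry out that covering estimate in full. No conceptual gap otherwise.
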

\begin{proof}
We use \cite[Theorem 3.40]{TaoWu2006} which states that any $d$-dimensional GAP $P$ in a torsion-free abelian group $G$ can be embedded in a proper GAP $P'$ of size at most $d^{C_0d^3}|P|$ for fixed constant $C_0$, and that if $P$ is non-proper, then $P'$ can be taken to have dimension at most $d-1$. Note a caveat that in \cite[Theorem 3.40]{TaoWu2006} this latter statement about the decrease in the dimension is stated for any abelian group $G$ but it, in fact, only holds, and is proved, for the torsion-free case. 

Now, we prove the lemma with the same $d(K)$ as in Lemma \ref{lem:Freiman}. 
First, consider a proper arithmetic progression $P_0$ of dimension $d_0 \leq d(K)$ and size at most $f(K)|A|$ which contains $A$. 
If $P_0$ is $k_0$-proper with $k_0:=\gamma(\lfloor |P_0|/|A|\rfloor, d)$ we stop. Otherwise, consider a GAP $P_1\supset k_0\cdot P_0$ of dimension $d_1\leq d_0-1$ and size at most $d^{C_0d^3}|k_0P_0|$. 
Again, if $P_1$ is $k_1$-proper with $k_1:=\gamma(\lfloor |P_1|/|A| \rfloor, d_1)$ we stop, otherwise we consider $P_2\supset k_1\cdot P_1$ of dimension  $d_2\leq d_1-1$ and size at most $d^{C_0d^3}|k_1P_1|$, etc. After some $s\leq d(K)$ steps we stop and obtain a GAP $P_s$ of dimension $d_s\leq d$ which is $k_s$ proper with $k_s:=\gamma(\lfloor |P_s|/|A|\rfloor, d_s)$. Moreover, $|P_s|/|A|$ is bounded by a function which only depends on $d(K), F(K)$ and $\gamma$.

\end{proof}


\subsection{Lemma \ref{lem:dense-subset} implies Theorem \ref{thm:main-operator}}

Recall that it only remains to prove the lower bound in Theorem \ref{thm:main-operator}. Before proving the reduction to the case of a dense subset of a cube, we observe that we may assume that $A\subset \mathbb{Q}^d$, see Lemma \ref{lem:rational-set-reduction}, and then show that if $|A+\mathcal{T}A| \ll |A|$ then the set $A\cup \mathcal{T}A$ can be embedded in a centered generalised arithmetic progression $P$ which is $k$-proper for some large $k$, see Lemma \ref{lem:shift-in-centred-GAP}. We also prove a simple lemma which is then used in the proof of the reduction.

\begin{lemma}\label{lem:rational-set-reduction}
The lower bound in Theorem \ref{thm:main-operator} follows from the lower bound in the special case when $A\subset \mathbb{Q}^d$.    
\end{lemma}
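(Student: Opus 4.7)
The plan is to embed the given finite set $A$ into $\mathbb{Q}^d$ by a single $\mathbb{Q}$-linear map $\pi\colon\mathbb{R}^d\to\mathbb{Q}^d$ that commutes with $\mathcal{T}$ and is injective on the finite set $A\cup(A+\mathcal{T}A)$. Such a $\pi$ will preserve both $|A|$ and $|A+\mathcal{T}A|$ exactly, so the assumed lower bound for rational sets transfers verbatim term by term along any sequence $A_n\subset\mathbb{R}^d$ with $|A_n|\to\infty$.

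The key observation enabling the construction is that the hypothesis $\mathcal{T}(\mathbb{Z}^d)\subset\mathbb{Z}^d$ forces the matrix of $\mathcal{T}$ in the standard basis to have integer entries; in particular $\mathcal{T}$ is $\mathbb{Q}$-linear once $\mathbb{R}^d$ is regarded merely as a $\mathbb{Q}$-vector space. First I would fix any $\mathbb{Q}$-basis $\{e_\alpha\}$ of $\mathbb{R}$ containing $1$ and introduce the $\mathbb{Q}$-linear coordinate projections $\pi_\alpha\colon\mathbb{R}^d\to\mathbb{Q}^d$ that pick off the $e_\alpha$-component of each entry. A routine computation using the integrality of $\mathcal{T}$ then yields $\pi_\alpha\circ\mathcal{T}=\mathcal{T}\circ\pi_\alpha$ for every $\alpha$, and consequently every finite $\mathbb{Q}$-linear combination $\pi=\sum_{\alpha\in F}\lambda_\alpha\pi_\alpha$ also commutes with $\mathcal{T}$.

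For the given $A$, the remaining task is to choose $(\lambda_\alpha)_{\alpha\in F}\in\mathbb{Q}^F$ so that $\pi(s)\neq 0$ for every element $s$ of the finite set $S$ of nonzero differences of elements of $A\cup(A+\mathcal{T}A)$. Taking $F$ to be the (finite) union of the supports of the $e_\alpha$-expansions of elements of $S$, each single condition ``$\pi(s)=0$'' cuts out a proper $\mathbb{Q}$-linear subspace of $\mathbb{Q}^F$, because $s\neq 0$ must have some nonzero $e_\alpha$-component. Since a vector space over the infinite field $\mathbb{Q}$ is never a union of finitely many proper subspaces, a suitable $\lambda$ exists.

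With such $\pi$, the set $A':=\pi(A)\subset\mathbb{Q}^d$ satisfies $|A'|=|A|$, and using that $\pi$ is $\mathbb{Q}$-linear, intertwines $\mathcal{T}$, and is injective on $A+\mathcal{T}A$, one also gets $|A'+\mathcal{T}A'|=|\pi(A)+\mathcal{T}\pi(A)|=|\pi(A+\mathcal{T}A)|=|A+\mathcal{T}A|$. Applying this construction to every term of an arbitrary sequence $A_n$ yields a sequence $A'_n\subset\mathbb{Q}^d$ with identical sumset ratios, to which the rational-case lower bound applies. The only genuinely non-routine point is the intertwining identity $\pi_\alpha\circ\mathcal{T}=\mathcal{T}\circ\pi_\alpha$, which depends essentially on $\mathcal{T}$ having rational matrix entries; the genericity argument producing $\lambda$ is standard and everything else is bookkeeping.
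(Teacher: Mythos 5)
Your proof is correct. It realizes the same underlying transfer principle as the paper — that passing from $\mathbb{R}$ to $\mathbb{Q}$ costs nothing because only $\mathbb{Q}$-linear relations among the points matter — but the implementation is genuinely different. The paper treats the coordinates of the points of $A$ as unknowns, records the system of coincidences $a+\mathcal{T}b=c+\mathcal{T}d$ (linear over $\mathbb{Q}$ since $\mathcal{T}$ is integral) together with non-equality conditions keeping the points distinct, and invokes the fact that such a system solvable over $\mathbb{R}$ is solvable over $\mathbb{Q}$; this yields $|A'|=|A|$ and only the one-sided bound $|A'+\mathcal{T}A'|\leq|A+\mathcal{T}A|$, which is all that is needed for a lower bound. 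You instead decompose coordinates over a $\mathbb{Q}$-basis of $\mathbb{R}$, check the intertwining $\pi_\alpha\circ\mathcal{T}=\mathcal{T}\circ\pi_\alpha$ (which does hold, precisely because $\mathcal{T}$ has integer entries), and take a generic rational combination of the projections; this gives the cleaner exact equality $|A'+\mathcal{T}A'|=|A+\mathcal{T}A|$, and your avoidance argument (a $\mathbb{Q}$-vector space is not a finite union of proper subspaces) plays the role that the solvability-of-the-system-with-non-equalities step plays in the paper. Two minor remarks: you do not need a full Hamel basis of $\mathbb{R}$ (and hence no choice), since it suffices to take a $\mathbb{Q}$-basis of the finite-dimensional $\mathbb{Q}$-span of $1$ and the coordinates of the points of $A$; and the injectivity of $\pi$ on $A$ alone already forces $|A'|=|A|$, while injectivity on $A+\mathcal{T}A$ gives the sumset equality, so requiring injectivity on the union, as you do, is exactly right.
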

\begin{proof}
Take an arbitrary finite set $A\subset \mathbb{R}^d$ write all relations of the form $a+\mathcal{T}b=c+\mathcal{T}d$ for $(a,b,c,d)\in A^4$ which are satisfied in coordinates. This gives a system of homogeneous linear equations over $\mathbb{Q}$. 
Together with all conditions ensuring that all points of $A$ are distinct 
(for any two points $a\ne b\in A$ 
we take a condition of non-equality type 
$a_j\ne b_j$ for certain coordinate index $j$)
this gives us a system 
of equalities and non-equalities that, since solvable over $\mathbb{R}$ (by elements of $A$) is also solvable over $\mathbb{Q}$ giving us a set $A'\subset \mathbb{Q}$ for which $|A'+\mathcal{T}A'|\leq |A+\mathcal{T}A|$.

\end{proof}

 So from now on we assume that $A\subset \mathbb{Q}^d$ and using induction we further assume that the statement has been proved for all operators in dimensions $1, 2, \dots, d-1$.

\begin{lemma}\label{lem:shift-in-centred-GAP}
    For any function $\gamma:\mathbb{N}\times\mathbb{N} \rightarrow \mathbb{N}$, an operator $\mathcal{T}\in \operatorname{End}(\mathbb{Z}^d)$, and $K>0$ there exist constants $n_0=n_0(\mathcal{T}, K)$ and $F=F(\gamma, \mathcal{T}, K)$ such that the following holds. Assume that $A\subset \mathbb{Z}^d$ satisfies $|A+\mathcal{T}A|\leq K\cdot |A|$. Then there exists a centred generalised arithmetic progression $P$ in $\mathbb{Z}^d$ of dimension $n\leq n_0$ which has size at most $F\cdot |A|$, contains $(A-x)\cup \mathcal{T}(A-x)$ for some $x\in\mathbb{Z}^d$, and is $k$-proper with $k:=\gamma(\lfloor|P|/|A|\rfloor, n)$.
 \end{lemma}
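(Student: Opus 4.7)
The plan is to apply Lemma~\ref{lem:Freiman-strong} to the shifted set $B := (A - x) \cup \mathcal{T}(A - x)$ for some $x \in A$, and then extract a centered GAP from its output. The point of choosing $x \in A$ is that $0 \in B$, since $0 = x - x \in A - x$ and $0 = \mathcal{T}x - \mathcal{T}x \in \mathcal{T}(A-x)$. Clearly $|B| \le 2|A|$, and $\mathbb{Z}^d$ is torsion-free, so once we bound the doubling of $B$ the hypotheses of Lemma~\ref{lem:Freiman-strong} are satisfied.

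For the doubling bound, translation invariance of sumset sizes gives
\[
|B + B| \le |A + A| + |A + \mathcal{T}A| + |\mathcal{T}A + \mathcal{T}A| \le 2|A + A| + |A + \mathcal{T}A|,
\]
where the last step uses $|\mathcal{T}A + \mathcal{T}A| = |\mathcal{T}(A + A)| \le |A + A|$. The hypothesis bounds $|A + \mathcal{T}A| \le K|A|$, so it remains to control $|A + A|$. Here I would apply Pl\"unnecke--Ruzsa to the pair $(A, \mathcal{T}A)$, which yields $|\mathcal{T}(A - A)| \le K^2 |A|$, and then invoke the inductive hypothesis on dimension to handle the component of $A$ along cosets of $\ker \mathcal{T}$ (projecting to $\mathbb{Z}^d / \ker \mathcal{T}$ reduces the kernel direction to a strictly lower-dimensional instance of the theorem). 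Together these should yield $|A + A| \le K_0(K, \mathcal{T}) \cdot |A|$, and hence $|B + B| \le K_1 |B|$ with $K_1 = K_1(K, \mathcal{T})$.

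Now apply Lemma~\ref{lem:Freiman-strong} to $B$ with an auxiliary function $\gamma'$ chosen so that the final properness condition matches the one claimed. This yields a proper GAP $P_0 = v_0 + \left\{\sum_{j=1}^n \ell_j v_j \suchthat 0 \le \ell_j \le L_j\right\} \supset B$ of dimension $n \le n_0(K_1)$ and size $|P_0| \le F'(K_1, \gamma') \cdot |A|$. Since $0 \in B \subset P_0$, we may write $0 = v_0 + \sum_j m_j v_j$ with $0 \le m_j \le L_j$; substituting, each $b = v_0 + \sum_j \ell_j v_j \in B$ becomes $\sum_j (\ell_j - m_j) v_j$ with $|\ell_j - m_j| \le L_j$. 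Therefore $B$ is contained in the centered GAP $P := \left\{\sum_j \ell_j v_j \suchthat -L_j \le \ell_j \le L_j\right\}$, of dimension $n$ and size at most $\prod_j (2L_j + 1) \le 2^n |P_0|$. The required $k$-properness of $P$ with $k = \gamma(\lfloor |P|/|A|\rfloor, n)$ is equivalent to $2k$-properness of $P_0$, and since $|P|/|A| \le 2^{n+1} |P_0|/|B|$, choosing e.g.\ $\gamma'(m, n) := 2\gamma(2^{n+1} m, n)$ gives exactly what is needed. Unpacking the definition of $B$ produces the statement with $x$ as above.

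The main obstacle is the doubling step: the hypothesis controls only the sumset $|A + \mathcal{T}A|$, and the bound on $|A + A|$ is not immediate when $\mathcal{T}$ has a large kernel relative to $A$, since a Sidon-like $A$ lying in $\ker \mathcal{T}$ would make the conclusion fail without further information. The inductive hypothesis in lower dimension is essential here to dispose of the kernel direction. Once $|B + B| \le K_1 |B|$ is in hand, the centering procedure above and the straight application of Lemma~\ref{lem:Freiman-strong} complete the argument.
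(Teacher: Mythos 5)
Your argument follows the same route as the paper's: bound the doubling of $B:=A\cup\mathcal{T}A$ (up to translation), apply Lemma~\ref{lem:Freiman-strong} with a pre-composed $\gamma'$, and recenter. Your recentering step (using $0\in B$ to rewrite $P_0$ as a centered GAP on the same generators, at the cost of a factor $2^n$ in size and a factor $2$ in properness) is correct and in fact slightly cleaner than the paper's passage to $P-P$; the bookkeeping with $\gamma'(m,n)=2\gamma(2^{n+1}m,n)$ is fine.

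The one place you diverge is the doubling bound, and there your proposed fix does not work. You are right that the hypothesis $|A+\mathcal{T}A|\le K|A|$ does not control $|A+A|$ when $\mathcal{T}$ is singular --- indeed, for $\mathcal{T}=0$ and $A$ a Sidon set the lemma's conclusion is simply \emph{false} (any $n$-dimensional GAP $P\supset A-x$ of size $F|A|$ would force $|A+A|\le 2^nF|A|$). Consequently no argument, inductive or otherwise, can close this gap \emph{within the lemma}; moreover, the lemma is a self-contained statement and the induction on $d$ lives in the proof of Theorem~\ref{thm:main-operator}, so there is no "inductive hypothesis" available here, and your sketch of how $|\mathcal{T}(A-A)|\le K^2|A|$ plus a lower-dimensional statement would control the fibers over $\ker\mathcal{T}$ is not an argument. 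The intended reading (implicit in the paper's one-line appeal to Pl\"unnecke) is that $\mathcal{T}$ is injective: then $|\mathcal{T}A|=|A|$ and the sum version of the Ruzsa triangle inequality gives $|A+A|\le |A+\mathcal{T}A|\,|\mathcal{T}A+A|/|\mathcal{T}A|\le K^2|A|$ and likewise $|\mathcal{T}A+\mathcal{T}A|\le K^2|A|$, whence $|B+B|\le(2K^2+K)|B|$. This suffices for the application, since the lemma is only invoked with $K=H^\circ(\mathcal{T})>1$, which forces $\mathcal{T}$ nonsingular (otherwise $x$ divides the characteristic polynomial and $H^\circ(\mathcal{T})=1$, making the lower bound trivial). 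So: restrict to injective $\mathcal{T}$ (or add that hypothesis to the statement), replace your kernel discussion by the Ruzsa sum triangle inequality, and the proof is complete.
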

\begin{proof}
Without loss of generality we may assume that $\gamma$ is increasing in each variable. By Pl\"unnecke inequality the set $B:=A\cup \mathcal{T}A$ satisfies $|B+B|\leq (2K^2+K)\cdot |B|$ and so by Lemma \ref{lem:Freiman-strong} we can embed both $A$ and $\mathcal{T}A$ in some GAP $P$ of dimension $n=O_K(1)$ and size $O_K(|A|)$ which is $\gamma'(\lfloor|P|/|A|\rfloor, n):=2\cdot \gamma(2^{n}\cdot \lfloor|P|/|A|\rfloor, n)$ proper. 

Take arbitrary $x\in A$ and consider $A':=A-x$. Since $A, \mathcal{T}A, \{x\}, \{\mathcal{T}x\} \subset P$, we have $A', \mathcal{T}A'\subset P-P =: P'$ which is a centred GAP. Also $P'$ has size at most $4^n|P|$ and is $k/2$ proper whenever $P$ is $k$-proper. It remains to note that $\gamma(\lfloor|P'|/|A'|\rfloor, n) \leq \gamma'(\lfloor|P|/|A|\rfloor, n) / 2$ by the definition of $\gamma'$.
\end{proof}

\begin{lemma}\label{lem:linear-combination-of-vectors}
    Let $v_1,\dots, v_d, v\in \mathbb{Z}^n$ be vectors such that $v\in \langle v_1,\dots, v_d\rangle$. 
    Assume that all coordinates of all $v_j$ are bounded, in absolute value, by some constant $C$. Then there exist integers $s, s_1,\dots, s_d$ such that 
    \[
    sv=\sum_{j=1}^d s_j v_j
    \]
    where $s_j=O_{C, n}(\|v\|_{L^\infty})$ for each $j=1,\dots, d$ and $s=O_{C, n}(1)$.
\end{lemma}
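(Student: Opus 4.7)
The plan is to reduce to the linearly independent case and then invoke Cramer's rule.

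First I would observe that we may assume $v_1,\dots,v_d$ are linearly independent over $\mathbb{Q}$. Indeed, choose a maximal linearly independent subset among $v_1,\dots,v_d$, express $v$ as an integer combination of this subset with a common denominator using the procedure described below, and set $s_j=0$ for all indices $j$ not in this subset. So without loss of generality $d\leq n$ and the $v_j$ are linearly independent.

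Next, form the $n\times d$ integer matrix $M$ whose $j$-th column is $v_j$. Since its columns are linearly independent, some $d\times d$ submatrix $M_0$ (obtained by selecting $d$ rows indexed by a set $I\subset\{1,\dots,n\}$) is invertible. Denote $D:=\det M_0$; this is a nonzero integer with $|D|\leq d!\,C^d=O_{C,n}(1)$ by the crude bound $|\det|\leq d!\prod\|\text{columns}\|_\infty$ (or by Hadamard). Let $v_{I}\in\mathbb{Z}^d$ denote the restriction of $v$ to the rows indexed by $I$. Solving the square system $M_0 t=v_{I}$ over $\mathbb{Q}$ by Cramer's rule gives $t_j=\det(M_0^{(j)})/D$, where $M_0^{(j)}$ is obtained from $M_0$ by replacing its $j$-th column by $v_{I}$. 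Expanding this determinant along the $j$-th column yields $|\det M_0^{(j)}|\leq d!\,C^{d-1}\|v\|_{L^\infty}=O_{C,n}(\|v\|_{L^\infty})$.

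Set $s:=D$ and $s_j:=\det M_0^{(j)}$, so that $s,s_j\in\mathbb{Z}$ satisfy the required size bounds. By construction the identity $s v_{I}=\sum_j s_j\,(v_j)_I$ holds on the coordinates in $I$. To upgrade this to an identity in $\mathbb{Z}^n$, note that the vector $w:=sv-\sum_j s_j v_j$ lies in $\langle v_1,\dots,v_d\rangle$ (since $v$ does) and vanishes on the rows indexed by $I$; but the restriction map $\langle v_1,\dots,v_d\rangle\to\mathbb{R}^d$ corresponding to the rows $I$ is injective because $M_0$ is invertible, hence $w=0$ in $\mathbb{Z}^n$. This gives the claimed identity $sv=\sum_j s_jv_j$ with integer coefficients satisfying the asserted bounds.

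There is no real obstacle here: the only mild care is in the reduction to the linearly independent case and in the final step of passing from the coordinates in $I$ to all of $\mathbb{Z}^n$, both of which are handled automatically by the linear independence of the chosen subset.
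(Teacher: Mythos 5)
Your proof is correct and follows essentially the same route as the paper's: reduce to a linearly independent subset of the $v_j$'s and then solve the resulting integer linear system, with the denominator and coefficient bounds coming from determinants of bounded integer matrices (the paper phrases this via a change of basis after augmenting with standard basis vectors, you via Cramer's rule on a nonsingular $d\times d$ submatrix — the same quantitative input). Your final step checking that the identity propagates from the coordinates in $I$ to all of $\mathbb{Z}^n$ is a nice explicit touch, but the argument is the same in substance.
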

\begin{proof}
Take some minimal subset $S$ of vectors among $v_1,\dots, v_d$ which linearly span $v$. Then vectors from $S$ are linearly independent and so we can augment them with several vectors of the standard basis of $\mathbb{Q}^n$ to form a basis $S'$ of $\mathbb{Q}^n$. It then remains to consider the unique linear combination of vectors in $S'$ giving $v$. All vectors that we added to $S$ will come with zero coefficients and so we will obtain a linear combination of vectors in $S$ giving $v$ in which all coefficients are rational numbers with denominators of size $O_{C, n}(1)$ and numerators of size $O_{C, n}(\|v\|_{L^\infty})$ as changing one basis to another multiplies the vector of coefficients by some fixed matrix with entries having bounded numerators and denominators.
\end{proof}

\begin{proof}[Proof of Theorem \ref{thm:main-operator} given Lemma \ref{lem:dense-subset}]


By Lemma \ref{lem:rational-set-reduction} and dilating $A$ if necessary, we assume that $A\subset \mathbb{Z}^d$. We also induct on the dimension $d$ assuming that statement has been proved for all smaller dimensions. Note that for the base case $d=1$ equivalence of Theorem \ref{thm:main-operator} and Lemma \ref{lem:dense-subset} immediately follows from Freiman's theorem. 

For a large finite set $A\subset \mathbb{Z}^d$ we want to show that 
    \[
    |A+\mathcal{T}A| \geq H^\circ(\mathcal{T})\cdot |A| + o(|A|).
    \]

In proving this we may assume the contrary, so $|A+\mathcal{T}A|\leq K\cdot |A|$ with $K=K(\mathcal{T})=H^\circ(\mathcal{T})$. Since $|A+\mathcal{T}A|=|(A-x)+\mathcal{T}(A-x)|$ for every $x\in\mathbb{Z}^d$, using Lemma \ref{lem:shift-in-centred-GAP} we may assume that both $A$ and $\mathcal{T}A$ are inside some centred generalised arithmetic progression $P$ of dimension $n$ which is $k$-proper with  $k:=\bold{k}(\lfloor |P|/|A|\rfloor, n)$ with function $\bold{k}$ to be defined later, and such that $|A|/|P|\geq \eps=\eps(\bold{k}, \mathcal{T})$.




Let $w$ be the basis vector of $P$ corresponding to the largest $L_j$. For this vector we know that $w, 2w, \dots, Lw\in P$ where $L:=L_j\gg |A|^{1/n}$. Since $L\gg |A|^{1/n}$, we may assume that $L$ is large enough in terms of $\mathcal{T}$ for our argument to work. Consider vectors $w_0:=w, w_1:=\mathcal{T}w, w_2:=\mathcal{T}^2w,\dots, w_{d-1}:=\mathcal{T}^{d-1}w$. We consider two cases depending on whether these vectors are linearly independent or not.

\textbf{Case 1: } Vectors $w_0, w_1,\dots, w_{d-1}$ are linearly dependent in $\mathbb{Q}^d$. Then the hyperplane $\alpha$ spanned by these vectors has dimension smaller than $d$ and is such that the set $A$ lies in at most $O(|A|/L)=O(|A|^{1-1/n})$ translates of $\alpha$. So we may write $A=A_1\sqcup A_2\sqcup\dots \sqcup A_m$ where $A_i\subset x_i+\alpha$, and $m\leq C(\mathcal{T})\cdot |A|^{1-1/n}$.
We may assume that $\lambda = -1$ is not an eigenvalue of $\mathcal{T}$, as otherwise $H^\circ(\mathcal{T}) = 2$ and the inequality $|A+\mathcal{T}A|\geq H^\circ(\mathcal{T})\cdot |A| - 1$ follows from 
the torsion-free version of
Cauchy–-Davenport theorem. 
Note that for $i\not = j$ we have $(A_i+\mathcal{T}A_i)\cap (A_j+\mathcal{T}A_j) = \emptyset$ as otherwise we would have $[\operatorname{Id}+\mathcal{T}](x_i-x_j) \in \alpha$ and since $\operatorname{Id}+\mathcal{T}$ is invertible and $\mathcal{T}\alpha \subset \alpha$ this would imply $x_i-x_j\in \alpha$ contradicting the fact that translates $x_i+\alpha$ and $x_j+\alpha$ are distinct.

For the sets $B_i:=A_i-x_i \subset \alpha$ we have $|A_i+\mathcal{T}A_i| = |B_i+\mathcal{T}_{|\alpha}B_i|$ and so the lower bound for the operator $\mathcal{T}_{|\alpha}$, which has dimension smaller than $d$, and sets $B_1,\dots, B_m$ gives us 
\[
|A+\mathcal{T}A| = \sum_{j=1}^m |B_j+\mathcal{T}_{|\alpha}B_j|
\geq
\sum_{j=1}^m H^\circ(\mathcal{T}_{|\alpha})\cdot |B_j| - o\left(\sum_{j=1}^m |B_j|\right) - O(m),
\]
where the last term comes from all the sets $B_j$ of constant size. Since $\sum |B_j| = |A|$ and $m\ll |A|^{1-1/n}$ this immediately implies the result for the operator $\mathcal{T}$ since $H^\circ(\mathcal{T}_{|\alpha})\leq H^\circ(\mathcal{T})$ by Proposition \ref{prop:h-h-0-relation}.

\textbf{Case 2:} Vectors $w_0, w_1,\dots, w_{d-1}$ are linearly independent in $\mathbb{Q}^d$. We proceed in several steps:

\textbf{Step 1:} We show that there exist some constants $\lambda=\lambda(\mathcal{T}, d, n, \eps)$ and $k'=k'(\mathcal{T}, d, n, \eps)$ such that all vectors $\ell \lambda w_{i}$ with $i\in \{0, 1, \dots, d-1\}$ and $\ell\in [0\dots L]$ are going to be in a multiple $k'\star P$ of $P$.

It suffices to show the existence of such $\lambda_j$ and $k_j$ for each $w_j$ separately and then take $\lambda:=\prod \lambda_j$ and $k':=\lambda\cdot\max_j\{k_j\}$. We induct on $j$. For $w_0:=w$ this follows from the construction as we took $w$ to be the basis vector with the largest coordinate $L$. Now, assume that for $w_{j-1}$ we have some values $\lambda_{j-1}, k_{j-1}$. Then $\lambda_{j-1} w_{j-1}, 2\lambda_{j-1} w_{j-1}, \dots, L \lambda_{j-1} w_{j-1}$ are all in $k_{j-1}\star P$. We want to show that there exist some $\lambda_j, k_j$ such that $\lambda_j s w_j \in k_j\star P$ for any $s\in [1,\dots, L]$. To prove this, note that it is sufficient to find $\lambda_j, k_j$ such that $\lambda_j s w_j \in k_j\star P$ holds for any $s\in [1,\dots, \delta L]$ with some constant $\delta=\delta(\lambda_{j-1}, k_{j-1},\eps, n)>0$ and then multiply $k_j$ by $\lceil 1/\delta \rceil$ to cover all $s\in [1,\dots, L]$. Indeed, this follows from a trivial observation that any $s\in [1,\dots L]$ can be written as a sum of at most $\lceil 1/\delta\rceil$ summands, each of which is in $[1,\dots, \delta L]$, and the fact that we have $L$ large enough. We now show how to construct such $\lambda_j, k_j$ for $\delta:=\eps/(k_{j-1}+1)^n$, where we recall that $\eps$ is a lower bound for $|A|/|P|$.

To this end, choose arbitrary $s\in [1, \dots, \delta L]$ and let $w=w(s):=sw_{j-1}$. Consider the following shifts of $A$: 
\[
A, A + \lambda_{j-1} w, A+2\lambda_{j-1} w, \dots A+ \frac{(k_{j-1}+1)^n}{\eps}\cdot \lambda_{j-1} w.
\] 
Since $A\subset P$, by induction hypothesis and the fact that $s\cdot \tfrac{(k_{j-1}+1)^n}{\eps} \leq L$, all these sets are in $P+k_{j-1}\star P$. As each of these sets has size $|A|\geq \eps|P|$ and the set $P+k_{j-1}\star P$ has size smaller than $(1+k_{j-1})^n |P|$, by Dirichlet's principle two of the sets must intersect and so we have, for some $c = c(s)\leq \tfrac{(k_{j-1}+1)^n}{\eps}$, that $c\lambda_{j-1}w\in A-A$. Since $\mathcal{T}(A-A)=\mathcal{T}A-\mathcal{T}A\subset P-P\subset 2\star P$, this implies that $c\cdot s\lambda_{j-1} w_j = \mathcal{T}(c\lambda_{j-1}w) \in 2\star P$. Which implies that for $C:=\operatorname{lcm}(1,\dots,\lfloor\tfrac{(k_{j-1}+1)^n}{\eps}\rfloor)$ we have
\[
C\cdot s\lambda_{j-1}w_j \in 2C\star P.
\]
Since $s\in [1,\dots, \delta L]$ was arbitrary, we can take $\lambda_j:=C\lambda_{j-1}$ and $k_j:=2C$. As mentioned above, to cover all $s\in [1,\dots, L]$ it is then sufficient to multiply $k_j$ by $\lceil 1/\delta\rceil$. This completes the proof of the induction step.

\textbf{Step 2: } We show that, for $\lambda$ and $k'$ as above, all $\lambda\cdot w_j$'s (which are in $k'\star P$)
have only small coordinates in the basis of $P$, and all non-zero coordinates correspond to dimensions with $L_j\gg L$. Indeed, write $\lambda w_j=\sum_{s=1}^n x_s v_s$, where $v_s\in [-k'L_s, k'L_s]$. Then with $t:=1+ \min_s k'\cdot L_s/|v_s|$ 
(where the minimum is taken over the coordinates with $v_s\not= 0$) we have $t \lambda w_j \in (2k')\star P\setminus k'\star P$ by the fact that $P$ is at least $2k'$ proper. This implies that we must have $t>L$ which in turn implies that $|k'L_s/v_s| > L$ for each coordinate $s$ where $v_s\not=0$. Since $L=\max L_j$, this is only possible if $|v_s|\leq k'$ and $L_s\geq L/k'$, proving the claim.


\textbf{Step 3:} Now, consider a natural embedding $\iota$ of $P$ into $\mathbb{Z}^n\subset \mathbb{Q}^n$ (i.e. $\iota$ maps basic vectors of $P$ to the standard basis of $\mathbb{Z}^n$) which by properness can be extended to $k\star P$, and consider a linear subspace $\alpha \subset \mathbb{Q}^n$ spanned by $\{\iota\lambda w_0, \dots, \iota \lambda w_{d-1}\}$. Split $\iota A\subset \iota P$ into subsets given by the intersections with shifts of this linear subspace. We claim that the corresponding subdivision $A:=A_1\sqcup \dots \sqcup A_m$ satisfies $(A_i+\mathcal{T}A_i)\cap (A_j+\mathcal{T}A_j) =\emptyset$ for $i\not= j$. 

Indeed, arguing from contradiction, we assume that $y_i+\mathcal{T} x_i=y_j+\mathcal{T} x_j$ for certain $x_i,y_i
\in A_i$ and $x_j,y_j\in A_j$. Then $\iota (\operatorname{Id}+\mathcal{T})(x_i-x_j) = \iota(x_i-y_i-x_j+y_j) \in \alpha$ and $\iota (x_i-x_j)\not\in \alpha$.

Since $x_i, x_j\in A\subset P$ and also $\mathcal{T} A\subset P$, we have  $z:=(\operatorname{Id}+\mathcal{T})(x_i-x_j) \in P+P-P-P = 4\star P$. We also know that $\iota z\in\alpha$ and $\iota z$ has all coordinates in $\mathbb{Z}^n$ at most $4L$ in absolute value. Since $\iota \lambda w_j \in \mathbb{Z}^n$ has all coordinates of size $O_{\mathcal{T}, \eps}(1)$ for each $j=0, 1, \dots, d-1$ by the argument in the second step of the proof, we can apply Lemma \ref{lem:linear-combination-of-vectors} to deduce that
$\iota \lambda' z = \sum_{j=0}^{d-1} s_j\iota w_j $ with $s_j=O_{\mathcal{T}, \eps}(L)$ and $\lambda'=O_{\mathcal{T}, \eps}(1)$. Since $\iota$ is well defined on $k\star P$, choosing the function $\bold{k}$ correctly this ensures that $\lambda' z=\sum_{j=0}^{d-1} s_j w_j$.

Now, as $(\operatorname{Id}+\mathcal{T})^{-1}$ can be written as a polynomial of $\mathcal{T}$ with integer coefficients of size 
$O_{\mathcal{T}}(1)$, call it $f(\mathcal{T})$, we have  
\[\lambda'(x_i-x_j) = f(\mathcal{T})[\lambda'(\operatorname{Id}+\mathcal{T})(x_i-x_j)] = f(\mathcal{T})[\lambda' z]
\]
Reducing $f(x)\cdot (\sum_j s_jx^j)$ modulo the minimal polynomial of $\mathcal{T}$, we can rewrite the latter expression as a linear combination of $w_0,\dots, w_{d-1}$ with integer coefficients of size $O_{\mathcal{T}, \eps}(L)$. Again, assuming function $\bold{k}$ was chosen large enough, this linear combination is in $k\star P \cap \iota^{-1}\alpha$, and so this implies that $\iota (x_i-x_j)\in \alpha$, giving a contradiction. This completes the proof of the fact that $A_i\cap \mathcal{T}A_i$ are pairwise disjoint.

\textbf{Step 4:} Second step in this proof ensures that in the representation $A:=A_1\sqcup \dots \sqcup A_m$ we have $m =O_{\mathcal{T},\eps}(|P|/L^d)$, and so it suffices to prove that $|A_j+\mathcal{T}A_j|\geq H^\circ(\mathcal{T})\cdot |A_j|-o(L^d)$ for each $j=1,2,\dots, m$ and then sum all these inequalities. After shifting $A_j$ by some $x_j\in A_j$ we have $\iota (A_j-x_j)\subset \alpha \cap [-2L, 2L]^n\subset \mathbb{Z}^n$ and by Lemma \ref{lem:linear-combination-of-vectors} and the argument in the second step of this proof we know that for some constants $\lambda''=O_{\mathcal{T}, \eps}(1)$ and $L_0=O_{\mathcal{T},\eps}(L)$ we have that \[
\lambda''(A_j-x_j)\subset \left\{\sum_{k=0}^{d-1} m_kw_k\,\mid\, m_k\in [-L_0, L_0]\right\},
\] 
which gives us a natural linear map $\iota': \lambda''(A_j-x_j)\rightarrow [-L_0, L_0]^d\subset \mathbb{Z}^d$ such that $\iota' \mathcal{T} = \widetilde{\mathcal{T}}\iota$ with the operator $\widetilde{\mathcal{T}}$ acting on the standard basis $\{e_0,\dots e_{d-1}\}$ of $\mathbb{Z}^d$ as $e_i\mapsto e_{i+1}$ for $i=0,\dots, d-2$ and $e_{d-1}\mapsto \sum_{k=0}^{d-1} \alpha_k e_k$ where $x^n-\sum_{k=0}^{d-1}\alpha_k x^k$ is the characteristic polynomial of $\mathcal{T}$. Let $B_j$ be the image of $A_j-x_j$ in $\mathbb{Z}^d$ under $\iota'$. Since $H^\circ(\widetilde{\mathcal{T}})=H^\circ(\mathcal{T})$ we have, by Lemma \ref{lem:dense-subset},
\[
|A_j+\mathcal{T}A_j|=|\lambda''(A_j-x_j)+\mathcal{T}[\lambda''(A_j-x_j)]|=|B_j+\widetilde{\mathcal{T}}B_j|\geq H^\circ(\mathcal{T})|B_j|+o(L^d),
\]
where we note that the last inequality is trivially true if $|B_j|=o(L^d)$.


\end{proof}

\section{The case of a dense subset of a box
} \label{sec:box-dense}

In this section we prove Lemma \ref{lem:dense-subset}. In order to do so, we approximate a discrete set $A\subset [0, N)^d\cap \mathbb{Z}^d$ by a continuous density function and then use the following generalisation of \cite[Theorem 2]{KP} 


\begin{lemma}\label{lem:density-functions-inequality}
    Let $\mathcal{T}\in \operatorname{End}(\mathbb{R}^d)$, and $K\subset \mathbb{R}^d$ be a compact set. Assume that measurable non-negative functions $f:K\rightarrow\mathbb{R}_+$ and $h:K+\mathcal{T}K\rightarrow\mathbb{R}_+$ satisfy, for any $x, y\in K$, the inequality $h(x+\mathcal{T}y)\geq f(x)$. Then one has
    \[
        \int_{K+\mathcal{T}K} h(z)\, d\mu(z)\geq H(\mathcal{T})\cdot \int_{K}f(x)\, d\mu(x),
    \]
    where $H(\mathcal{T}):=\prod_{i=1}^d (1+|\lambda_i|)$ with $\lambda_i$'s being eigenvalues of $\mathcal{T}$, and $\mu$ is the Lebesgue measure on $\mathbb{R}^d$. 
\end{lemma}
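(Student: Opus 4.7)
The strategy is to reduce to the set version of the inequality (namely \cite[Theorem 2]{KP}, which is the special case $f = \mathbbm{1}_K$, $h = \mathbbm{1}_{K+\mathcal{T}K}$) by a standard layer-cake argument applied to both sides.

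First, for each $t > 0$ I would introduce the super-level sets
\[
K_t := \{x \in K : f(x) \geq t\}, \qquad L_t := \{z \in K + \mathcal{T}K : h(z) \geq t\},
\]
both of which are measurable. The layer-cake formula for non-negative measurable functions then gives
\[
\int_K f \, d\mu = \int_0^\infty \mu(K_t) \, dt, \qquad \int_{K + \mathcal{T}K} h \, d\mu = \int_0^\infty \mu(L_t) \, dt.
\]

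Next I would exploit the pointwise hypothesis $h(x + \mathcal{T}y) \geq f(x)$: whenever $x \in K_t$ and $y \in K$ one has $h(x + \mathcal{T}y) \geq f(x) \geq t$, so $x + \mathcal{T}y \in L_t$. This proves the inclusion $K_t + \mathcal{T}K \subseteq L_t$, and since $K_t \subseteq K$ it also gives $K_t + \mathcal{T}K_t \subseteq L_t$. Applying \cite[Theorem 2]{KP} to $K_t$ therefore yields
\[
\mu(L_t) \geq \mu(K_t + \mathcal{T}K_t) \geq H(\mathcal{T}) \cdot \mu(K_t).
\]

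Finally, I would integrate this inequality over $t \in (0, \infty)$ and invoke the two layer-cake identities to obtain the desired $\int h \geq H(\mathcal{T}) \int f$. The only (very mild) obstacle is that \cite[Theorem 2]{KP} is stated for compact sets while $K_t$ is only measurable; this is handled routinely by inner regularity of the Lebesgue measure, approximating $K_t$ from inside by compact subsets, applying the compact-set inequality, and passing to the limit.
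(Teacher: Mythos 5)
Your proof is correct, but it reduces to \cite[Theorem 2]{KP} by a different mechanism than the paper. You slice both functions into super-level sets $K_t=\{f\ge t\}$ and $L_t=\{h\ge t\}$, observe that the pointwise hypothesis gives $K_t+\mathcal{T}K_t\subseteq K_t+\mathcal{T}K\subseteq L_t$, apply the set-version inequality to each $K_t$ (after an inner-regularity approximation by compacts, which you correctly flag and which is indeed routine since $K_t$ has finite measure and $L_t$ is measurable), and integrate over $t$ via the layer-cake formula. The paper instead lifts the problem one dimension up: it forms the subgraph $K_f=\{(x,t):x\in K,\ 0\le t\le f(x)\}\subset\mathbb{R}^{d+1}$ and the operator $\mathcal{T}'(x,t)=(\mathcal{T}x,0)$, checks the single inclusion $K_f+\mathcal{T}'K_f\subseteq (K+\mathcal{T}K)_h$, and applies \cite[Theorem 2]{KP} once in dimension $d+1$, using that the extra zero eigenvalue contributes a factor $1+0=1$ so that $H(\mathcal{T}')=H(\mathcal{T})$. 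The two arguments are close in spirit (the subgraph is, via Fubini, the integral of your level sets), but they trade different conveniences: the paper's lift needs only one application of the black-box theorem and no integration in $t$, at the cost of invoking it for a subgraph that is merely measurable; your slicing stays in dimension $d$, avoids the observation about $H(\mathcal{T}')$, and handles the regularity issue explicitly, at the cost of a continuum of applications plus the (harmless) measurability checks that $t\mapsto\mu(K_t)$ and $t\mapsto\mu(L_t)$ are monotone and that empty level sets give trivial inequalities. Both are complete proofs.
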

\begin{rem}
  \cite[Theorem 2]{KP} bounds the volume
  of $K+\mathcal{T}K$ from below as $H(\mathcal{T})$ times the volume of $K$. In other words,
  it exactly coincides with the case $f=\mathbbm{1}_K$ and $h=\mathbbm{1}_{K+\mathcal{T}K}$. 
\end{rem}
\begin{proof}
    Consider the set $K_f\subset\mathbb{R}^{d+1}$ defined by $K_f:=\{(x, t):\, x\in K, \, 0\leq t\leq f(x)\}$, and the operator $\mathcal{T}'\in \operatorname{End}(\mathbb{R}^{d+1})$ defined by $\mathcal{T}'(x, t):=(\mathcal{T}(x), 0)$. Then the inequality $h(x+\mathcal{T}y)\geq f(x)$ implies the inclusion 
    \[
    K_f+\mathcal{T}'(K_f)\subset (K+\mathcal{T}K)_h,
    \]
    and so it suffices to apply \cite[Theorem 2]{KP} to the set $K_f$ of measure equal to $\int_K f(x)\, d\mu(x)$ and the map $\mathcal{T}'$ which satisfies $H(\mathcal{T}')=H(\mathcal{T})$.
\end{proof}





To approximate a discrete set $A$ by a continuous density function, we need the following structural result. In the following, for an integer $M$ by an \emph{$M$-cube} we mean a cube $[0, M)^d$ shifted by an element of $(M\mathbb{Z})^d$.



\begin{lemma}\label{lem:structural}
For any $\eps, \delta > 0$ and $d\geq 1$ there exists $B_0=B_0(\eps, \delta, d)$ such that the following holds. Let $\mathcal{P}:=[0, N)^d \cap \mathbb{Z}^d$ be a cube and let $A\subset \mathcal{P}$ be a set of size at least $\eps|\mathcal{P}|$. Then there exist $B \leq B_0$ and a collection $\{\mathcal{P}_1,\dots,\mathcal{P}_s\}$ of disjoint $N/B$-cubes such that the set $A':=A\cap (\cup \mathcal{P}_i)$ satisfies 
\begin{itemize}
\item $|A'|\geq (1-\delta) |A|$ 
\item $A'$ is topologically $\delta$-dense in each $\mathcal{P}_i$, in the sense that $\forall x\in \mathcal{P}_i\, \exists y\in A'\cap \mathcal{P}_i:\, |x_j-y_j|_{L^\infty}\leq \delta N/B $ for $j=1,\dots, d$.
\end{itemize}
\end{lemma}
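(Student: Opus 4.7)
The plan is to perform an adaptive iterative subdivision of $\mathcal{P}$ combined with an energy-increment argument to bound the recursion depth, and then to unify the scales of the finalized cubes via a grid-aligned density condition that is stable under further refinement.

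Fix $M:=\lceil 2/\delta\rceil$ and $\tau:=\delta\eps/2$, and iterate over levels $k=0,1,\dots$. At each level every active cube $\mathcal{Q}$ of side $N/M^k$ is handled in one of three ways: it is discarded if $|A\cap\mathcal{Q}|<\tau|\mathcal{Q}|$; finalized at level $k$ if $A$ is $\delta$-dense in $\mathcal{Q}$; otherwise replaced by the $M^d$ children of its natural subdivision. Since discarded cubes across all levels are pairwise disjoint, their total $A$-mass is at most $\tau N^d\leq(\delta/2)|A|$.

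Termination is forced by tracking the energy $E_k:=\sum_{\mathcal{Q}\text{ active at level }k}|A\cap\mathcal{Q}|^2/|\mathcal{Q}|$, which is nondecreasing in $k$ by Cauchy--Schwarz and lies in $[\eps|A|,|A|]$. The choice $M=\lceil 2/\delta\rceil$ ensures that any non-$\delta$-dense $\mathcal{Q}$ contains an empty $L^\infty$-ball of radius $\delta\cdot\mathrm{side}(\mathcal{Q})$ which fully contains at least one of the $M^d$ children, so that child is $A$-empty; a short variance-style computation then gives $E_{k+1}-E_k\geq\tfrac{\tau}{M^d-1}\cdot|A\cap\text{subdivided at level }k|$. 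Telescoping yields an absolute depth bound $K_0=K_0(\eps,\delta,d)$ after which the remaining active $A$-mass falls below $(\delta/2)|A|$, and together with the discard bound this produces $|A\cap\text{FINAL}|\geq(1-\delta)|A|$.

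The principal obstacle is scale unification: the finalized cubes naturally emerge at varying levels $k\leq K_0$, whereas the lemma requires all $\mathcal{P}_i$ to be $N/B$-cubes for a single $B$, and a $\delta$-dense cube does not in general remain $\delta$-dense when refined. The fix is to rerun the algorithm with the strictly stronger, grid-aligned condition "$\mathcal{Q}$ is $\ell$-filled" in place of "$\delta$-dense", where $\ell:=N/M^{K_0}$ is fixed in advance and $\mathcal{Q}$ is $\ell$-filled iff every cell of the fixed atomic grid of side $\ell$ lying inside $\mathcal{Q}$ contains an element of $A$; finalization is further restricted to the scales $N/M^k\geq\ell/\delta$. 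The $\ell$-filled property is manifestly inherited by every grid-aligned sub-cube, and for a cube of side $L\geq\ell/\delta$ it implies $\delta$-density at scale $L$, so at the end every finalized cube can be freely subdivided down to the common scale $N/B:=\ell/\delta$ while retaining $\delta$-density in every resulting $N/B$-cube. The resulting $B=\delta M^{K_0}$ depends only on $\eps,\delta,d$. The remaining technical point is to re-verify the energy step with the stronger test in place — one must check that a non-$\ell$-filled non-sparse cube of scale $\geq\ell/\delta$ still supplies an $A$-empty $M$-child, so that the variance lower bound goes through unchanged — and this is the main bookkeeping at the heart of the argument.
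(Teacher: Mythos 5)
The first half of your argument (adaptive subdivision with the $\delta$-dense stopping rule, termination via the energy $E_k$) is sound, and you correctly identify scale unification as the crux. But the step you defer as ``the main bookkeeping'' is in fact false, and the whole second run collapses on it. A cube $\mathcal{Q}$ of side $L\geq \ell/\delta$ fails to be $\ell$-filled as soon as a \emph{single} atomic cell of side $\ell$ inside it is $A$-empty; when $L/M\gg\ell$ such a cell occupies a vanishing fraction of every $M$-child, so no child need be $A$-empty and the variance increment $E_{k+1}-E_k\geq\frac{\tau}{M^d-1}|A\cap\{\text{subdivided}\}|$ is simply not available. This is not a repairable bookkeeping issue: take $A$ to be all of $\mathcal{P}$ minus one atomic $\ell$-cell from each cube of the tiling at scale $\ell/\delta$. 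Then $|A|\geq(1-\delta^d)|\mathcal{P}|$, yet \emph{no} cube of any scale $\geq\ell/\delta$ at any level is $\ell$-filled, so your second run subdivides everything down to the cutoff without finalizing anything, and the entire mass of $A$ is lost. (The lemma itself is of course true for this $A$ --- it is topologically $\delta$-dense already at a single moderate scale --- which shows the failure lies in the $\ell$-filled test, not in the statement.) The underlying obstruction is the one you yourself note: $\delta$-density is not inherited under refinement, and strengthening the test to something refinement-stable makes it unattainable for dense sets.

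The way out is to never produce cubes at mixed scales in the first place, which is what the paper does and which is considerably simpler than an energy increment. For $\ell\geq 0$ let $\mathcal{P}^{(\ell)}$ be the union of the cells of the scale-$\delta^{\ell}N$ tiling that meet $A$. These sets are nested and all have measure $\geq|A|\geq\eps|\mathcal{P}|$, so for some $\ell=O_{\eps,\delta,d}(1)$ one has $|\mathcal{P}^{(\ell+1)}|\geq(1-\delta^{d+1}\eps)|\mathcal{P}^{(\ell)}|$; at that single level, all but an $\eps\delta$-fraction (by measure) of the occupied cells have \emph{every} one of their $\delta^{-d}$ children occupied, which is exactly topological $\delta$-density, and discarding the bad cells costs at most $\eps\delta|\mathcal{P}|\leq\delta|A|$ points of $A$. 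Here the monotone quantity is the occupied volume rather than an $L^2$ energy, and the ``barely decreasing'' level is where you stop --- uniformly across the whole cube --- so the scale-unification problem never arises. If you want to keep your energy-increment framework, you would similarly need to select a single level at which the increment is small for \emph{all} active cubes simultaneously, rather than finalizing cubes one at a time.
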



\begin{proof}

In the following we tacitly assume $1/\delta$ to be an integer. For $\ell\geq 0$ let $B_\ell:=\delta^{-\ell}$. For each $\ell$ split $\mathcal{P}$ into $B_\ell^d$ equal parts and let $\mathcal{P}^{(\ell)}:=\cup_{i=1}^{s_\ell}\mathcal{P}_i^{(\ell)}$ be the union of parts which contain at least one point of $A$. By construction we have $\mathcal{P}^{(\ell)}\subset \mathcal{P}^{(\ell-1)}$.

Notice that $|\mathcal{P}^{(\ell)}|\geq |A|\geq \eps\cdot |\mathcal{P}|$ and so for some $\ell\leq \tfrac{\log{\eps}}{\log{(1-\delta^{d+1}\eps)}}$ we must have $|\mathcal{P}^{(\ell+1)}|\geq (1-\delta^{d+1}\eps)|\mathcal{P}^{(\ell)}|$. This means that at least $1-\eps\delta$ fraction of $\mathcal{P}_i^{(\ell)}$'s are subsets of $\mathcal{P}^{(\ell+1)}$ (i.e. we kept all $\delta^{-d}$ smaller parts of them). Let $\mathcal{Q}^{(\ell)}\subset \mathcal{P}^{(\ell)}$ be the union of such $\mathcal{P}_i^{(\ell)}$'s and define $A':=A\cap \mathcal{Q}^{(\ell)}$. Then the second condition of the lemma is satisfied and we also have
\[
|A'|\geq |A|-|\mathcal{P}^{(\ell)}\setminus\mathcal{Q}^{(\ell)}|\geq |A|-\eps\delta\cdot|\mathcal{P}^{(\ell)}|\geq |A|-\eps\delta|\mathcal{P}|\geq (1-\delta)|A|.
\]
Moreover, by construction we have $B = \delta^{-\ell}\leq \exp{\{\tfrac{-\log{\eps}\cdot \log{\delta}}{\log{(1-\delta^{d+1}\eps)}}\}}$
\end{proof}

\begin{rem}
    By throwing away additionally at most $\eps|A|$ points from $A'$ we could ask the density of $A'$ to be at least $\eps^2/2$ in each of the $\mathcal{P}_i$. 
\end{rem}

We now turn to proving Lemma \ref{lem:dense-subset} which we restate for the convenience of the reader.

\newtheorem*{lem:dense-subset}{Lemma \ref{lem:dense-subset}}

\begin{lem:dense-subset}
    Let $\mathcal{T}:\mathbb{Z}^d\rightarrow\mathbb{Z}^d$ be a linear operator and $\eps>0$. For any subset $A\subset \{0, 1, \dots, N-1\}^d$ of size $|A|\geq \eps \cdot N^d$ we have 
    \[
    |A+\mathcal{T} A| \geq H^\circ(\mathcal{T})\cdot |A| - o_N(|A|),
    \]
    where the implied constant in $o_N(\cdot)$ may depend both on $\mathcal{T}$ and $\eps$.   
\end{lem:dense-subset}

\begin{proof}[Proof of Lemma \ref{lem:dense-subset}]

Let $\delta>0$ be small enough. Since $A\subset [0, N)^d$, we have $\mathcal{T}A\subset [-CN, CN]^d$ for some $C=C(\mathcal{T})$. Using Lemma \ref{lem:structural} we construct $\mathcal{T}A'\subset \mathcal{T}A$ of size at least $(1-\delta )|A|$ and a collection $\{\mathcal{Q}_1,\dots, \mathcal{Q}_s\}$ of cubes of size $N/B$, where $B < B_0(\eps, \delta, d, C)$, such that $\mathcal{T}A'$ is topologically $\delta$-dense in each of $\mathcal{Q}_i$'s. 

Now, consider a set $\mathcal{T}^{-1}\left(\cup \mathcal{Q}_i\right)$ and approximate it with a collection of $\delta'N/B$-cubes $\{\mathcal{P}_1,\dots, \mathcal{P}_{s'}\}$ by taking all $\delta'N/B$-cubes inside each of the sets $\mathcal{T}^{-1}\mathcal{Q}_j$. For any $\delta'$ small enough in terms of $\mathcal{T}$ we can ensure that 
\[
|\mathcal{Q}_j\setminus (\cup_{j'=1}^{s'} \mathcal{P}_{j'})| \leq C_1(\mathcal{T})\cdot \delta' \cdot |\mathcal{Q}_j|, 
\]
with some constant $C_1(\mathcal{T})$ depending only on $\mathcal{T}$.

We now consider the set $K:=\cup_{i=1}^{s'} \mathcal{P}_i$ and a piece-wise constant function $f:K\rightarrow\mathbb{R}_+$ defined on it by $f(x):=|A\cap \mathcal{P}_i|/|\mathcal{P}_i|$ for each $x\in \mathcal{P}_i$. We then cover $K+\mathcal{T}K$ by $\delta'N/B$ cubes $\{\mathcal{R}_1,\dots, \mathcal{R}_{s''}\}$ and consider a piece-wise constant function $h:K+\mathcal{T}K\rightarrow\mathbb{R}_+$ defined by $h(z):=|(A+\mathcal{T} A)\cap \mathcal{R}_i|/|\mathcal{R}_i| + (1+\delta/\delta')^d-1$ for each $z\in \mathcal{R}_i$.


\bigskip

\textbf{Claim:} Functions $f, h$ satisfy the assumption of Lemma \ref{lem:density-functions-inequality}, i.e for any $x\in K$ and $y\in K$ one has $h(x+\mathcal{T} y) \geq f(x)$. 

\textbf{Proof of the claim:} Indeed, consider a $\delta'N/B$-cubes $\mathcal{P}_{j'}$ containing $x$ and $\mathcal{R}_{j''}$ containing $x+\mathcal{T}y$. For some $y_0\in (\delta'N/B \cdot \, \mathbb{Z})^d$ we have $\mathcal{R}_{j''} = y_0 + \mathcal{P}_{j'}$. Since $x+\mathcal{T}y$ lies both in $\mathcal{P}_{j'}+\mathcal{T}y$ and $\mathcal{R}_{j''} = y_0 + \mathcal{P}_{j'}$, we must have $\|y_0-\mathcal{T}y\|_{L^\infty} \leq \delta' N / B$.

By construction we have $\mathcal{T}K\subset \cup \mathcal{Q}_i$, so there exists some $j\in [1, \dots, s]$ such that $\mathcal{T}y\in \mathcal{Q}_j$. The fact that $y_0\in (\delta'N/B \, \mathbb{Z})^d$ and $\|y_0-\mathcal{T}y\|_{L^\infty} \leq \delta' N / B$ implies that $y_0$ also lies in (the closer of) $\mathcal{Q}_j$. Since $\mathcal{T}A'$ is $\delta$-dense in $\mathcal{Q}_j$ there exists $y'\in A'$ such that $\|\mathcal{T}y'-y_0\|\leq \delta N/B$. We then have 
\begin{equation}\label{eq:h-bound}
|(A+\mathcal{T}A)\cap \mathcal{R}_{j''}|
\geq 
|((A\cap \mathcal{P}_{j'})+\mathcal{T}y')\cap \mathcal{R}_{j''}|
\geq 
|A\cap \mathcal{P}_{j'}| - |(\mathcal{P}_{j'}+\mathcal{T}y')\setminus \mathcal{R}_{j''}|
\end{equation}
Since $\mathcal{R}_{j''} = y_0+\mathcal{P}_{j'}$ and $\|\mathcal{T}y'-y_0\|\leq \delta N/B$ we can bound the last term by 
\[
(\delta'N/B + \delta N / B)^d - (\delta' N / B)^d \leq |\mathcal{R}_{j''}|\cdot \left((1+\delta/\delta')^d-1\right).
\]
Dividing \eqref{eq:h-bound} by $|\mathcal{R}_{j''}| = |\mathcal{P}_{j'}|$ we infer that $h(x+\mathcal{T}y)\geq f(x)$. This concludes the proof of the claim.

\bigskip

Now by Lemma \ref{lem:density-functions-inequality} we know that 
\begin{equation}\label{eq:inequality-densities}
\int_{K+\mathcal{T}K} h(z)\, d\mu(z)\geq H(\mathcal{T})\cdot \int_{K}f(x)\, d\mu(x).    
\end{equation}
 
Recalling the definition of $h$ we can upper bound the LHS by 
\[
|A+\mathcal{T}A| + ((1+\delta/\delta')^d-1)\cdot |K+\mathcal{T}K| \leq |A+\mathcal{T}A| + ((1+\delta/\delta')^d-1) \cdot |A|/\eps \cdot C_2(\mathcal{T}).
\]
Whereas for the integral on the right we have a lower bound of 
\begin{align*}
|A'| - |\mathcal{T}^{-1}(\cup \mathcal{Q}_j) \setminus (\cup \mathcal{P}_{j'})| 
&\geq
(1-\delta)|A| - C_1(\mathcal{T})\cdot \delta'^{1/d}\cdot |\cup \mathcal{Q}_j|
\\&\geq 
|A|-\delta|A| - C_3(\mathcal{T})\cdot (|A|/\eps) \cdot \delta'^{1/d},
\end{align*}
where for the last inequality we used the fact that $|A|/\eps \geq N^d$ and that all cubes $\mathcal{Q}_j$ are inside $[-CN, CN]^d$ for $C=C(\mathcal{T})$. It then remains to choose first $\delta'$ small enough in terms of $\mathcal{T}$ and $\eps$ and then $\delta$ small enough in terms of $\delta', \eps, \mathcal{T}$ to conclude that \eqref{eq:inequality-densities} implies that 
\[
|A+\mathcal{T}A|\geq H(\mathcal{T})\cdot |A| - o(|A|).
\]

\end{proof}

\textbf{Acknowledgement: } We would like to thank Ilya Losev for useful discussions and useful comments on earlier version of the paper.

\bibliographystyle{plain}

\bibliography{main}

\begin{thebibliography}{10}

\bibitem{Balog2014}
Antal Balog and George Shakan.
\newblock On the sum of dilations of a set.
\newblock {\em Acta Arithmetica}, 164(2):153--162, 2014.

\bibitem{BUKH2008}
Boris Bukh.
\newblock Sums of dilates.
\newblock {\em Combinatorics, Probability and Computing}, 17(05):627--639, June
  2008.

\bibitem{ConlonLim-algebraic}
David Conlon and Jeck Lim.
\newblock Sums of linear transformations.
\newblock arXiv:2203.09827v1, 2022.

\bibitem{ConlonLim-transcendental}
David Conlon and Jeck Lim.
\newblock Sums of transcendental dilates.
\newblock arXiv:2212.10128v1, 2022.

\bibitem{Green2007}
Ben Green and Imre~Z. Ruzsa.
\newblock Freiman's theorem in an arbitrary abelian group.
\newblock {\em Journal of the London Mathematical Society}, 75(1):163--175,
  January 2007.

\bibitem{Konyagin2006}
Sergei Konyagin and Izabella {\L}aba.
\newblock Distance sets of well-distributed planar sets for polygonal norms.
\newblock {\em Israel Journal of Mathematics}, 152(1):157--179, December 2006.

\bibitem{KP}
Dmitry Krachun and Fedor Petrov.
\newblock On the size of {$A +\lambda A$} for algebraic $\lambda$.
\newblock {\em Mosc. J. Comb. Numb. Th.}, 12(2):117--126, 2023.

\bibitem{Sanders2008}
Tom Sanders.
\newblock {Appendix to ``Roth's theorem on progressions revisited'', by J.
  Bourgain}.
\newblock {\em Journal d'Analyse Math{\'{e}}matique}, 104(1):193--206, January
  2008.

\bibitem{sanders2012}
Tom Sanders.
\newblock On the {Bogolyubov--Ruzsa} lemma.
\newblock {\em Anal. PDE}, 5(3):627--655, 2012.

\bibitem{schoen2011}
Tomasz Schoen.
\newblock Near optimal bounds in {Freiman}'s theorem.
\newblock {\em Duke Math. J.}, 158(1):1--12, 05 2011.

\bibitem{TaoWu2006}
Terence Tao and Van~H. Vu.
\newblock {\em Additive Combinatorics}.
\newblock Cambridge Studies in Advanced Mathematics. Cambridge University
  Press, 2006.

\end{thebibliography}

\end{document}